\newdimen\mainfontsize \mainfontsize=1\@ptsize pt
\renewcommand\@makefntext[1]{%
  \noindent\makebox[0.5em][r]{\@makefnmark}#1}
\def\blfootnote{\xdef\@thefnmark{}\@footnotetext}
\newtheorem{theorem}{Theorem}[section]
\newtheorem{proposition}[theorem]{Proposition}
\theoremstyle{remark}
\theoremstyle{definition}
\newtheorem{example}[theorem]{Example}
\newtheorem{remark}[theorem]{Remark}
\newtheorem{assumption}[theorem]{Assumption}
\newcommand{\ud}{\,\mathrm{d}}
\newcommand{\vd}{\mathrm{d}}
\newcommand{\R}{\mathbb{R}}
\newcommand{\F}{\mathcal{F}}
\newcommand{\E}{\mathbb E}
\newcommand{\N}{\mathbb N}
\newcommand{\ep}{\epsilon}
\newcommand{\lt}{\left}
\newcommand{\rt}{\right}
\newcommand{\pt}{\partial}
\newcommand{\tend}{\rightarrow}
\def\P{{\mathbb P}}
\newcommand{\Ind}{\mathbbm{1}}
\newcommand{\supp}{\mathop{\mathrm{supp}}\nolimits}
\title{Optimal stopping of a Brownian bridge with an unknown pinning point}       
\author{Erik Ekstr\"om \footnotemark[1] \and 
Juozas Vaicenavicius
\footnote{Department of Mathematics, Uppsala University, Box 480, 751 06 Uppsala, Sweden 
\mbox{(\href{mailto:ekstrom@math.uu.se}{\nolinkurl{ekstrom@math.uu.se}}, \href{mailto:juozas.vaicenavicius@math.uu.se}{\nolinkurl{juozas.vaicenavicius@math.uu.se}}}).} }
\date{}
\begin{document}
\maketitle

\begin{abstract}
The problem of stopping a Brownian bridge with an unknown pinning point to maximise the expected value at the stopping time is studied. A few general properties, such as continuity and various bounds of the value function, are established. 
However, structural properties of the optimal stopping region are shown to crucially depend on the prior, and we provide a 
general condition for a one-sided stopping region. Moreover, a detailed analysis is conducted in the cases of the two-point and the mixed Gaussian priors, revealing a rich structure present in the problem.
 
\smallskip
\smallskip
\smallskip
\noindent
\textit{MSC 2010 subject classifications:} primary 60G40; secondary 60G35, 60J25.

\noindent
\textit{Keywords and phrases:} Brownian bridge, optimal stopping, sequential analysis, stochastic filtering, incomplete information.

%
\end{abstract}


\section{Introduction}

The Brownian bridge is a fundamental process in statistics and probability theory. For example, it appears in the limit for a normalised difference between the empirical and the true distribution, and it also plays a crucial role in the Kolmogorov-Smirnov test. Moreover, the Brownian bridge is a large population limit of the cumulative sum process obtained by sampling randomly from a finite population without replacement (see \cite{bR64}).

The notion of `pinning' refers to a situation in which the process is strongly attracted to 
a particular value at a certain time point. Besides the statistical examples mentioned above, pinning phenomena have also been observed in financial markets, see \cite{AL}; 
in particular, a tendency for stock prices to end up in the vicinity of a strike price at an option's maturity was reported. 
In \cite{AKL}, \cite{AL} and \cite{JIS}, efforts to explain the phenomenon based on models of
price impact are provided. 
Another natural example in which pinning with an unknown pinning point may occur is in connection with a presidential election or a referendum where the financial 
market favours one of the competing options. As the election date approaches, the market price of a financial asset is affected by information collected sequentially (opinion polls, new actions by the opposing parties, other news, etc.) 
Finally, processes exhibiting pinning naturally arise in the Kyle-Back model of insider trading, see \cite{B}, \cite{CD} and the references therein.

In the current article, we study an optimal stopping problem where the underlying payoff process is a Brownian bridge with an unknown pinning point. In a Bayesian formulation of the problem, the initial beliefs (knowledge) about the unknown pinning point are described by a prior distribution. As time evolves, the information obtained from observing the process is being used to update the initial beliefs about the pinning point. To accommodate arbitrary beliefs, in the general set-up of our problem, we allow for a general prior distribution. In the case of a known pinning point, an optimal stopping strategy is determined in \cite{lS}; also, see \cite{EW} for an alternative proof.  


In addition to a financial motivation (optimal liquidation), a statistical interpretation of the optimal stopping problem for a Brownian bridge with an unknown pinning point can also be given. An analogue of  Donsker's invariance principle in the case of randomly sampling from a finite population without replacement says that a normalised cumulative sum process converges in distribution to a Brownian bridge as the population size increases (see \cite[Theorem 13.1]{bR64}). Thus the Brownian bridge can be used as an approximation for the normalised cumulative sum process. If the value of each sample drawn is regarded as gain (loss if negative), then the cumulative sum represents the total gain from the samples drawn thus far. Hence our stopping problem  corresponds to the problem of terminating sequential sampling without replacement from a large finite population to maximise the expected gain when the exact mean of the population is unknown. 

Formulating the stopping problem under a general prior and using
filtering theory, the original problem can be rewritten as a Markovian optimal stopping problem, and we show that the value function is continuous and solves a free-boundary problem.  
Further general properties, however, appear to be scarce due to the sensitivity of the problem to the prior. 
Indeed, by general optimal stopping theory, solving a Markovian optimal stopping problem boils down to determining the so-called {\em continuation region} and its complement, 
the {\em stopping region}, and our study shows that different priors lead to structurally different optimal stopping strategies, suggesting that general structural properties are rare. In fact, for general prior distributions, multiple boundaries and stop-loss regions may exist. 

These structural complications make studies of the stopping problem under a general prior distribution infeasible and motivate us to focus on properties that help to understand certain classes of subproblems. In particular, for compactly supported 
prior distributions, we provide 
estimates for the value function based on suboptimal strategies and coupling arguments. These estimates may be used for
classifying points as belonging to the stopping region or to the continuation region. This is illustrated for the two-point distribution,
where, at least for some parameter values, large portions of the state space can be classified as stopping or continuation points.
Even in this simple case, the resulting optimal stopping problem has a rich structure, with the optimal stopping strategy often having a stop-loss level and multiple too-good-to-persist levels at a given fixed time. Also, the value function of the optimal stopping problem
cannot be extended to a continuous function at the pinning time. These features are results of the fact that the support of the two-point distribution is disconnected, and are expected to extend to other prior distributions with disconnected support.

Due to the complicated geometry of the stopping region, it is of interest to identify cases in which the problem has a simpler structure. We provide an analytic condition for a single  boundary separating the stopping region from the continuation region. Naturally, this structural property simplifies the analysis, and well-established optimal stopping theory may be applied, for example, to derive 
regularity 
properties of the boundary and to characterise the stopping boundary in terms of an integral equation. The analytic condition for a single stopping boundary is fulfilled by a large class of mixed Gaussian prior distributions, including the normal one.



\section{Model description and preliminary considerations}
\label{S:model}

Let $T>0$ be a deterministic time and $Z'=\{ Z'_t\}_{t\in [0,T]}$ be a one-dimensional Brownian motion with variance $\mathrm{Var}( Z_t')=\sigma^2t$, started at a point $Z'_0=\tilde{z}$. Also, let $\tilde{X}$ be a random variable with probability distribution $\tilde{\mu}$. If we condition  $Z'$ to satisfy $Z'_T=\tilde{X}$ a.s., we obtain a new process $\tilde{Z}$, which is a Brownian bridge pinning at a random point $\tilde{X}$ at time $T$. It is well-known that the Brownian bridge $\tilde{Z}$ admits a representation 
\begin{IEEEeqnarray}{rCl} \label{E:ZG}
\lt\{
\begin{array}{ll}
\vd \tilde{Z}_{t} = \frac{\tilde X-\tilde{Z}_{t}}{T-t} \ud t + \sigma \vd \tilde W_{t}, \; 0 \leq t < T,\\
\tilde{Z}_{0}=\tilde{z},
\end{array} \rt.
\end{IEEEeqnarray} 
as a stochastic differential equation (SDE), where $\tilde W$ is a standard Brownian motion.
The optimal stopping problem we are interested in is
\begin{IEEEeqnarray}{rCl} \label{E:OSO}
\tilde{V} &=& \sup_{0\leq\tau \leq T} \E[\tilde{Z}_{\tau}],
\end{IEEEeqnarray}
where the supremum is taken over random times $\tau$ that are $\mathcal F^{\tilde Z}$-stopping times (we use the convention 
that $\mathcal F^U=\{\mathcal F^U_t\}_{t\geq 0}$ is the filtration generated by a process $U$).
In particular, no a priori knowledge about the pinning point $\tilde X$, apart from its distribution, is assumed.

Define a process $Z_s := \frac{1}{\sigma \sqrt{T}} \tilde{Z}_{Ts}$, and note that $Z$ is a standard Brownian bridge pinning at 
$X:= \frac{1}{\sigma \sqrt{T}} \tilde{X}$ at time 1, i.e.~a standard Brownian motion conditioned to pin at $X$ at time $1$.
Here $X$ has distribution 
$\mu(\cdot)=\tilde\mu(\,\cdot \,\sigma\sqrt T)$,
and
the process $Z$ admits the SDE representation
\begin{IEEEeqnarray}{rCl} \label{E:ZN}
\lt \{
\begin{array}{ll}
\vd Z_s = \frac{ X - Z_s}{1-s} \ud s + \ud W_s\,, \;  0 \leq s < 1, \\
Z_0 = z,
\end{array} \rt.
\end{IEEEeqnarray}
where $z:= \frac{\tilde{z}}{\sigma \sqrt{T}}$ and $W_s := \frac{1}{\sqrt{T}}\tilde{W}_{Ts}$ is a Brownian motion. 
Moreover, 
$\tilde V=\sigma\sqrt T V$, where
\begin{IEEEeqnarray}{rCl} \label{E:OSR}
V &=& \sup_{0\leq\tau \leq 1} \E[Z_{\tau}]
\end{IEEEeqnarray}
and $\tau$ is a stopping time with respect to the filtration generated by the process $Z$. Thus, without loss of generality, 
we may consider the problem \eqref{E:OSR} instead of \eqref{E:OSO}.
Moreover, since an additive shift in the initial condition $z$ corresponds to an additive shift of the prior 
distribution $\mu$ and the value $V$,
we may without loss of generality assume that $Z_0=0$. Finally, we will throughout the paper 
assume that the distribution $\mu$ has a finite first moment.

\subsection{Review of the classical Brownian bridge}
In this subsection we briefly review the results for the special case of stopping a 
Brownian bridge with a known pinning point. 
Thus, suppose $\{Z_t\}_{t\in[0,1]}$ is a Brownian bridge as in \eqref{E:ZN} pinning at a deterministic point $r \in \R$, i.e.~$X=r$. 
The stopping problem \eqref{E:OSR} can be embedded into a Markovian framework by 
defining a Markovian value function 
\begin{IEEEeqnarray*}{rCl} 
v_r(t,z) = \sup_{\tau \in \mathcal{T}^Z_{1-t}} \E\lt[ Z^{t,z}_{t+\tau} \rt], \quad (t,z) \in [0,1) \times \R,
\end{IEEEeqnarray*}
where $\mathcal{T}^Z_{1-t}$ denotes the set of stopping times with respect to the process $Z=Z^{t,z}$,
where the indices indicate that $Z_t^{t,z}=z$.
The value function $v_r$ has an explicit solution (derived in \cite{lS}, see also \cite[Section 2]{EW})
\begin{IEEEeqnarray}{rCl}\label{E:known}
v_r(t,z) = 
\lt \{
\begin{array}{ll}
r+ \sqrt{2\pi(1-t)} (1-\beta^2)\exp\lt( \frac{(z-r)^2}{2(1-t)} \rt) \Phi\lt( \frac{z-r}{\sqrt{1-t}}   \rt) & \text{ if } z < b_r(t), \\
z & \text{ if } z \geq b_r(t).
\end{array}
\rt.
\end{IEEEeqnarray}
Here $\Phi$ is the cumulative distribution function of a standard normal random variable, $\beta$ is the unique positive solution to 
\begin{IEEEeqnarray*}{rCl}
\sqrt{2\pi} (1-\beta^2) e^{\frac{\beta^2}{2}} \Phi(\beta) = \beta,
\end{IEEEeqnarray*}
and $b_r(t) = r+ \beta \sqrt{1-t}$ is a square-root boundary (approximately, $\beta \approx 0.839924$).
Moreover, as $x \mapsto \sqrt{2\pi} (1-x^2) e^{\frac{x^2}{2}} \Phi(x) - x$ is continuous and monotone 
on $(0,\infty)$, the value of $\beta$, and hence also the function $v_r$, can be calculated to any desired precision.
Furthermore, the region 
\[\mathcal{D}_r := \{(t,z) \in [0,1]\times \R \,:\, z = v_r(t,z) \}=\{(t,z) \in [0,1]\times \R \,:\, z\geq r+\beta \sqrt{1-t}\}\] 
is an optimal stopping region, i.e.~ $\tau_{\mathcal{D}_r}$ is an optimal stopping time. A depiction of this optimal strategy appears in 
Figure~\ref{F:BBS}.

\begin{figure}[h!] \label{F:BBS}
\centering 
\scalebox{0.8}{
\begin{overpic}[scale=0.5, tics=5]{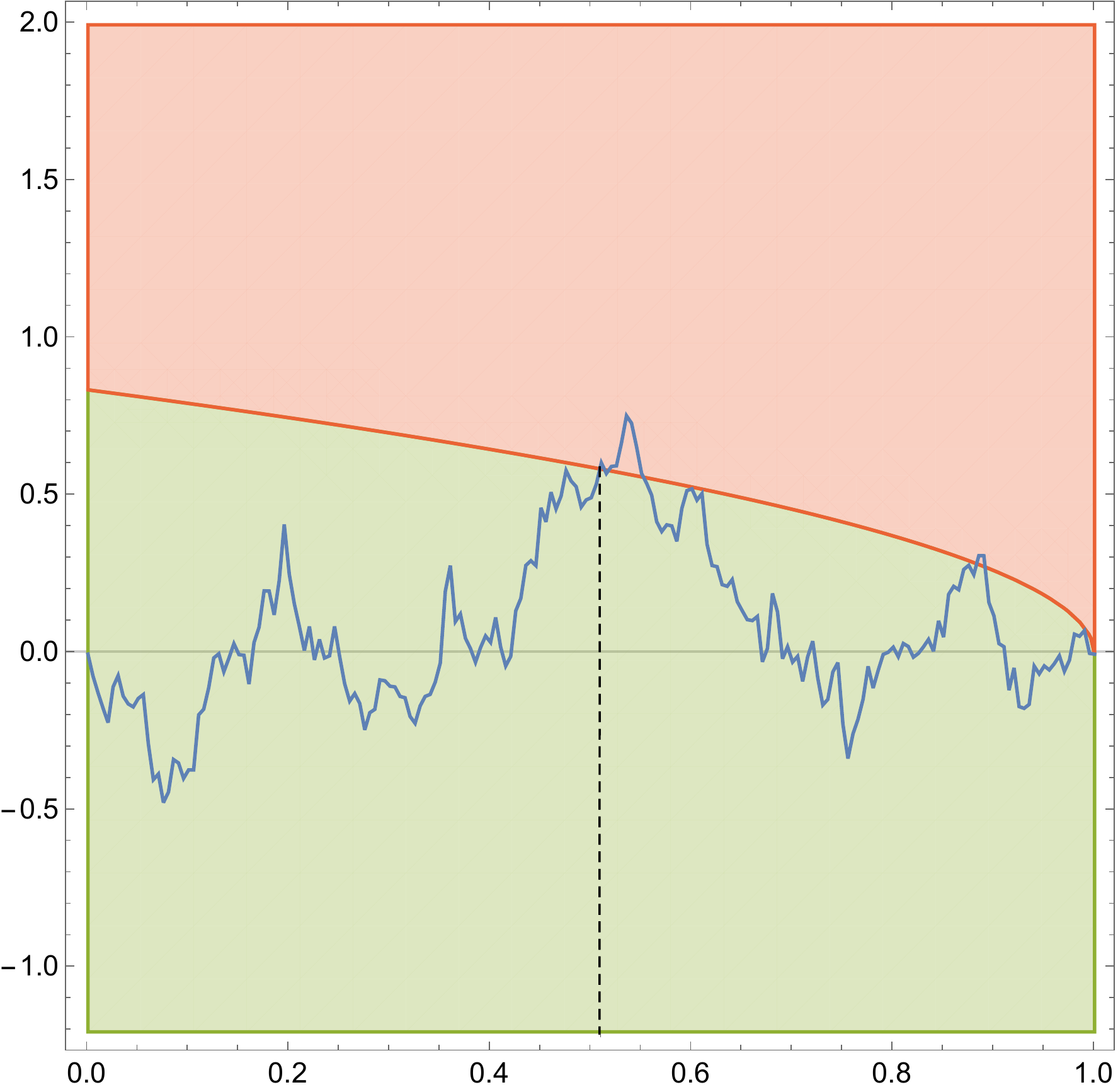}
    \put(52,0.6){\large $\tau_{\mathcal{D}_r}$}
    \put(60,75){\LARGE $\mathcal{D}_r$}
    \put(60, 20){\LARGE $\mathcal{C}_r$}
    \put(7, 47){ \small $t\mapsto Z_t(\omega)$}
   \put(50, -4){$t$}
   \put(-3.5, 50){$z$}
   \put(25,61.6){\small$b_r(t)=\beta\sqrt{1-t}$}
\end{overpic}}
\vspace{10pt}
\caption{A sample trajectory of a standard Brownian bridge $Z$, the stopping region $\mathcal{D}_r$ in pink, the continuation region $\mathcal{C}_r$ in green, as well as the boundary $b_r$ in red; the case $r=0$. }
\label{F:BBS}
\end{figure}

\subsection{Filtering equations}

We now return to the set-up with a general prior distribution as described in the beginning of Section~\ref{S:model}.
Our first result explains how to calculate the posterior distribution given observations of the underlying process. 

\begin{proposition}
\label{T:filtering}
Assume that $Z_0=0$.
Then $\P( X\in \cdot \,|\,\F^Z_t)=\P(X\in \cdot \,|\,Z_t)$, and
$\P(X\in \cdot \,|\,Z_t=z)=\mu_{t,z}(\cdot)$, where
\begin{IEEEeqnarray}{rCl}\label{E:mu}
\mu_{t,z}(\vd u) := \frac{ e^{\frac{uz}{1-t}-\frac{tu^{2}}{2(1-t)}} \mu(\vd u)}{\int_{\R} e^{\frac{uz}{1-t}-\frac{tu^{2}}{2(1-t)}} \mu(\vd u)}.
\end{IEEEeqnarray}
In particular, $\hat X_t := \E[ X\,|\, \F^Z_t]= h(t,Z_t)$, where the function $h$ is given by 
\begin{IEEEeqnarray}{rCl}\label{E:h}
h(t,z) := \frac{\int_{\R}u e^{\frac{uz}{1-t}-\frac{tu^{2}}{2(1-t)}} \mu(\vd u)}{\int_{\R} e^{\frac{uz}{1-t}-\frac{tu^{2}}{2(1-t)}} \mu(\vd u)}.
\end{IEEEeqnarray}
Moreover,
\begin{IEEEeqnarray}{rCl} \label{E:ZP}
\vd Z_t = \frac{ \hat X_t - Z_t}{1-t} \ud t + \ud \hat W_t\,, \;  0 \leq t < 1,
\end{IEEEeqnarray}
where $\hat W$ is a $\mathcal{F}^Z$-Brownian motion.
\end{proposition}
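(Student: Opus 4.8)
The plan is to derive everything from the likelihood of an observed trajectory under the hypothesis $\{X=u\}$. First I would fix $t\in(0,1)$ and work on $[0,t]$, where the drift $\tfrac{X-Z_s}{1-s}$ is bounded in $s$ so no degeneracy occurs. Conditionally on $X=u$, the process $Z$ solves $\vd Z_s=\tfrac{u-Z_s}{1-s}\vd s+\vd W_s$, so by Girsanov's theorem its law on $\F^Z_t$ has density
\[
L^u_t=\exp\lt(\int_0^t \frac{u-Z_s}{1-s}\,\vd Z_s-\frac12\int_0^t \frac{(u-Z_s)^2}{(1-s)^2}\,\vd s\rt)
\]
with respect to the Wiener measure of a standard Brownian motion started at $0$.

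The crucial step is to isolate the dependence of $L^u_t$ on $u$. Splitting the drift as $\tfrac{u}{1-s}-\tfrac{Z_s}{1-s}$ and applying integration by parts to $\int_0^t \tfrac{\vd Z_s}{1-s}$ (using $Z_0=0$ and $\int_0^t \tfrac{\vd s}{(1-s)^2}=\tfrac{t}{1-t}$), the terms linear in $u$ arising from the stochastic integral and from the quadratic term cancel exactly, leaving
\[
L^u_t=g_t\,\exp\lt(\frac{uZ_t}{1-t}-\frac{tu^2}{2(1-t)}\rt),
\]
where $g_t$ is $\F^Z_t$-measurable but independent of $u$. Feeding this factorisation into the Bayes (Kallianpur--Striebel) formula $\P(X\in \vd u\mid \F^Z_t)=L^u_t\,\mu(\vd u)\big/\int_{\R} L^{u'}_t\,\mu(\vd u')$, the factor $g_t$ cancels between numerator and denominator. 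This simultaneously shows that the posterior depends on the path only through $Z_t$, i.e.\ $\P(X\in\cdot\mid\F^Z_t)=\P(X\in\cdot\mid Z_t)$, and yields the explicit expression \eqref{E:mu}. Formula \eqref{E:h} for $\hat X_t$ then follows by integrating $u$ against $\mu_{t,Z_t}$, the finite-first-moment assumption on $\mu$ (together with the Gaussian factor $e^{-tu^2/2(1-t)}$ for $t>0$) guaranteeing integrability.

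For the innovation representation \eqref{E:ZP}, I would set $\hat W_t:=Z_t-\int_0^t \tfrac{\hat X_s-Z_s}{1-s}\,\vd s$ and show it is an $\F^Z$-Brownian motion. Substituting \eqref{E:ZN} gives $\hat W_t=W_t+\int_0^t \tfrac{X-\hat X_s}{1-s}\,\vd s$, so $\hat W$ is continuous, $\F^Z$-adapted, and has quadratic variation $\langle \hat W\rangle_t=t$, the added absolutely continuous term not contributing. By Lévy's characterisation it remains to verify the $\F^Z$-martingale property: for bounded $\F^Z_s$-measurable $\eta$ and $s<t$, the contribution of $W_t-W_s$ vanishes because $W$ is a Brownian motion in the enlarged filtration $\sigma(X)\vee\F^W$ (as $X\perp W$ in the construction of $Z$), while $\E[\eta(X-\hat X_r)]=0$ for $r\geq s$ by the tower property since $\hat X_r=\E[X\mid\F^Z_r]$; a Fubini argument (legitimate as $\tfrac{1}{1-r}$ is bounded on $[s,t]$ and $\E|X-\hat X_r|\leq 2\E|X|$) then gives $\E[\eta(\hat W_t-\hat W_s)]=0$.

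The main obstacle is the first part: establishing that $Z_t$ is a \emph{sufficient statistic} for $X$, i.e.\ that conditioning on the whole trajectory yields nothing beyond conditioning on the current value. This is exactly what the exact cancellation of the $u$-linear terms in $L^u_t$ delivers; conceptually it reflects the fact that, given $X=u$ and $Z_t=z$, the path $(Z_s)_{s\leq t}$ is a standard Brownian bridge between $(0,0)$ and $(t,z)$ whose law does not involve $u$. The innovations step is then standard once adaptedness and the quadratic variation are in hand.
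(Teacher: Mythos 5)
Your proof is correct, but it takes a genuinely different route from the paper. The paper first applies the deterministic time--space change $Y_s=(1+s)Z_{s/(1+s)}$, which turns the bridge into the canonical ``constant signal observed in additive white noise'' model $\vd Y_s=X\ud s+\vd B_s$; the posterior formula and the innovation representation are then read off from standard filtering references (Bain--Crisan), and transformed back to the $(t,z)$ coordinates. You instead work directly with $Z$: Girsanov gives the likelihood $L^u_t$ of the observed path under $\{X=u\}$, the Itô integration by parts $\int_0^t\frac{\vd Z_s}{1-s}+\int_0^t\frac{Z_s}{(1-s)^2}\ud s=\frac{Z_t}{1-t}$ (using $Z_0=0$) factorises $L^u_t$ into a $u$-free part times $\exp\bigl(\frac{uZ_t}{1-t}-\frac{tu^2}{2(1-t)}\bigr)$, and the Bayes formula then yields \eqref{E:mu} together with the sufficiency of $Z_t$ in one stroke; finally you verify the innovation process is an $\F^Z$-Brownian motion by hand via L\'evy's characterisation rather than quoting the innovations theorem. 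Your approach is more self-contained and makes the sufficiency of $Z_t$ conceptually transparent (your closing remark that the conditional path given $X=u$ and $Z_t=z$ is a bridge from $(0,0)$ to $(t,z)$ independent of $u$ is exactly the right intuition), at the cost of having to justify the absolute continuity of the bridge law with respect to Wiener measure on $[0,t]$, $t<1$ (Novikov/Bene\v{s}-type conditions, which do hold here since the drift has linear growth uniformly on $[0,t]$); the paper's approach outsources all such technicalities to the cited filtering results. Your integrability checks for \eqref{E:h} and for the martingale step of the innovations argument are the right ones.
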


\begin{proof}
Define $Y_{s} := (1+s)Z_{t(s)}$, where $t(s) = s/(1+s)$ and $s\geq 0$. Then
\begin{IEEEeqnarray*}{rCl}
\vd Y_{s} &=& Z_{t(s)} \ud s + (1+s) \ud Z_{t(s)} \\
&=& Z_{t(s)} \ud s + (1+s) \lt( \frac{X-Z_{t(s)}}{1-t(s)} \ud t(s) + \vd W_{t(s)} \rt)\\
&=& X \ud s + \ud B_{s},
\end{IEEEeqnarray*}
where $B_s:=\int_0^{t(s)}\frac{1}{1-u} \ud W_u$ is a Brownian motion. 
From filtering theory (see \cite[Proposition~3.1]{BC09}),
$\P( X\in \cdot \,|\,\F^Y_s)=\P(X\in \cdot \,|\,Y_s)$, and
$\P(X\in \cdot \,|\,Y_s=y)=\nu_{s,y}(\cdot)$, where
\begin{IEEEeqnarray}{rCl} \label{E:FE}
\nu_{s,y}(\vd u) := \frac{ e^{uy-\frac{su^{2}}{2}} \mu(\vd u)}{\int_{\R} e^{uy-\frac{su^{2}}{2}} \mu(\vd u)}.
\end{IEEEeqnarray}
In particular, 
\begin{IEEEeqnarray}{rCl} \label{E:deff}
\E\lt[ X \,|\, \F^{Y}_{s} \rt] =\E\lt[X \,|\, {Y}_{s} \rt]= f(s,Y_{s}),
\end{IEEEeqnarray}
where
\begin{IEEEeqnarray}{rCl} \label{E:FF}
f(s,y) = \frac{\int_{\R} u e^{uy-u^{2}s/2} \mu(\vd u)}{\int_{\R} e^{uy-u^{2}s/2} \mu(\vd u)}.
\end{IEEEeqnarray}
Moreover, 
\begin{IEEEeqnarray}{rCl}\label{E:Y}
\vd Y_s 
&=& f(s, Y_s) \ud t + \ud \hat B_s, \IEEEyesnumber \label{E:Yeqn}
\end{IEEEeqnarray}
where $\hat B_s := \int_0^s (X-\E [X \,|\, \F^Y_u]) \ud u + B_s$ is an $\F^Y$-Brownian motion (see \cite[Proposition 2.30 on p.~33]{BC09}), known as the innovation process. Thus, recalling that $Z_t = (1-t) Y_{\frac{t}{1-t}}$, $t \in [0, 1)$, we get that
\begin{IEEEeqnarray*}{rCl} 
\vd Z_t = \frac{ \hat X_t - Z_t}{1-t} \ud t + \ud \hat W_t\,, \;  0 \leq t < 1,
\end{IEEEeqnarray*}
where $\hat W_t=\int_0^{t/(1-t)}\frac{1}{1+s}\ud \hat B_s$ is a $\mathcal{F}^Z$-Brownian motion on $[0,1)$.
Here $\hat X_t := \E[ X\,|\, \F^Z_t]= h(t,Z_t)$, where the function $h$ satisfies $h(t, z) = f(\frac{t}{1-t}, \frac{z}{1-t})$. 
Also, the identity \eqref{E:FE} tells us that $\P( X\in \cdot \,|\,\F^Z_t)=\P(X\in \cdot \,|\,Z_t)$ and that
\begin{IEEEeqnarray*}{rCl}
\P(X\in \cdot \,|\,Z_t=z)=\mu_{t,z}(\cdot),
\end{IEEEeqnarray*}
where
\begin{IEEEeqnarray*}{rCl}
\mu_{t,z}(\vd u) := \frac{ e^{\frac{uz}{1-t}-\frac{tu^{2}}{2(1-t)}} \mu(\vd u)}{\int_{\R} e^{\frac{uz}{1-t}-\frac{tu^{2}}{2(1-t)}} \mu(\vd u)}.
\end{IEEEeqnarray*}
\end{proof}

\begin{remark}
Note that, by the above, an equivalent reformulation of \eqref{E:OSR} is
\begin{IEEEeqnarray}{rCl} \label{E:OSY}
V=\sup_{\tau \in \mathcal{T}^{Y}} \E \lt[\frac{1}{1+\tau} Y_{\tau} \rt],
\end{IEEEeqnarray}
where $\mathcal{T}^{Y}$ denotes the set of stopping times with respect to the filtration generated by $Y$ 
given by \eqref{E:Y}, and where $Y_\tau/(1+\tau):=\lim_{t\to\infty}Y_t/(1+t)$ on $\{\tau=\infty\}$. In fact, this formulation was used by Shepp in his study \cite{lS} of the Brownian bridge with a known
pinning point.
\end{remark}

From here on, we work under the following assumption.

\begin{assumption}
\label{ass}
The variance of the posterior distribution $\mu_{t,z}$ in \eqref{E:mu} is bounded on $[0,t_0]\times\R$ for any fixed $t_0\in[0,1)$.
\end{assumption}

Since $\hat X=h(t,Z_t)$, the equation \eqref{E:ZP} gives a description of the bridge $Z$ as the solution of a stochastic differential equation. A consequence of Assumption~\ref{ass} is that the drift of $Z$ is Lipschitz continuous in the spatial variable on $[0,t_0]\times\R$ for any $t_0\in[0,1)$.

\begin{proposition}
\label{T:Lip}
The function $h(t,z)$ (and therefore also the drift $(h(t,z)-z)/(1-t)$ of $Z$) is Lipschitz continuous in $z$, uniformly in $t\in[0,t_0]$ for any $t_0\in[0,1)$.
\end{proposition}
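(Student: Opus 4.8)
The plan is to show that $h$ is differentiable in $z$ with spatial derivative bounded uniformly on $[0,t_0]\times\R$, and then to conclude Lipschitz continuity by the mean value theorem. The key structural fact, recorded in Proposition~\ref{T:filtering}, is that $h(t,z)=\int_\R u\,\mu_{t,z}(\vd u)$ is nothing but the mean of the posterior distribution $\mu_{t,z}$ from \eqref{E:mu}. To exploit this, write $\phi_{t,z}(u):=\frac{uz}{1-t}-\frac{tu^2}{2(1-t)}$, so that the numerator and denominator in \eqref{E:h} are $N(t,z):=\int_\R u\,e^{\phi_{t,z}(u)}\mu(\vd u)$ and $D(t,z):=\int_\R e^{\phi_{t,z}(u)}\mu(\vd u)$, and observe that $\partial_z\phi_{t,z}(u)=u/(1-t)$.

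Differentiating $h=N/D$ in $z$ is then a direct computation. Using $\partial_z N=\frac{1}{1-t}\int_\R u^2 e^{\phi_{t,z}}\mu(\vd u)$ and $\partial_z D=\frac{1}{1-t}\int_\R u\,e^{\phi_{t,z}}\mu(\vd u)=\frac{N}{1-t}$, the quotient rule gives, after simplification,
\[
\partial_z h(t,z)=\frac{1}{1-t}\left(\frac{\int_\R u^2 e^{\phi_{t,z}}\mu(\vd u)}{D(t,z)}-\left(\frac{N(t,z)}{D(t,z)}\right)^2\right)=\frac{\Var(\mu_{t,z})}{1-t},
\]
i.e.\ the spatial derivative of the posterior mean equals the posterior variance divided by $1-t$. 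With this identity the result is immediate. By Assumption~\ref{ass} the posterior variance $\Var(\mu_{t,z})$ is bounded on $[0,t_0]\times\R$, say by $C$, while $1/(1-t)\le 1/(1-t_0)$ for $t\in[0,t_0]$; hence $|\partial_z h(t,z)|\le C/(1-t_0)$ there, and integrating in $z$ yields the claimed Lipschitz estimate, uniform in $t\in[0,t_0]$. The Lipschitz continuity of the drift $(h(t,z)-z)/(1-t)$ follows at once, since subtracting $z$ contributes the constant derivative $-1$ and dividing by $1-t\ge 1-t_0$ preserves boundedness.

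The only step requiring care is the justification of differentiating under the integral sign when computing $\partial_z N$ and $\partial_z D$, which reduces to showing that the first and second moments of the tilted measure $e^{\phi_{t,z}}\mu$ are finite and locally bounded in $z$. For $t\in(0,t_0]$ this is routine, as the Gaussian factor $e^{-tu^2/(2(1-t))}$ makes $e^{\phi_{t,z}(u)}$ and all its polynomial multiples integrable against $\mu$, locally uniformly in $z$, so dominated convergence applies. The borderline case $t=0$, where $\phi_{0,z}(u)=uz$ and integrability of $u^2e^{uz}$ against $\mu$ is not automatic, is covered by the same argument because the finiteness it requires is exactly what the boundedness of $\Var(\mu_{0,z})$ in Assumption~\ref{ass} provides. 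I expect this interchange, rather than the elementary algebra leading to the variance identity, to be the only genuinely technical point of the proof.
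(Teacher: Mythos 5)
Your proof is correct and follows essentially the same route as the paper's: differentiate $h$ in $z$, identify $\partial_z h(t,z)$ as $\Var(\mu_{t,z})/(1-t)$, and invoke Assumption~\ref{ass} together with $1-t\geq 1-t_0$ to bound it uniformly. The paper states this as a one-line ``straightforward differentiation''; your additional remarks on justifying the interchange of differentiation and integration (and on the $t=0$ case) are a reasonable elaboration of the same argument, not a different approach.
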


\begin{proof}
Straightforward differentiation shows that 
\begin{IEEEeqnarray*}{rCl} 
\frac{\partial h(t,z)}{\partial z} &=& \frac{\int_{\R}u^2 e^{\frac{uz}{1-t}-\frac{tu^{2}}{2(1-t)}} \mu(\vd u)}{(1-t)\int_{\R} e^{\frac{uz}{1-t}-\frac{tu^{2}}{2(1-t)}} \mu(\vd u)}-\frac{1}{1-t}\left(\frac{\int_{\R}u e^{\frac{uz}{1-t}-\frac{tu^{2}}{2(1-t)}} \mu(\vd u)}{\int_{\R} e^{\frac{uz}{1-t}-\frac{tu^{2}}{2(1-t)}} \mu(\vd u)}\right)^2\\
&=& \frac{1}{1-t}\left( \int_{\R}u^2  \mu_{t,z}(\vd u)-\left( \int_{\R}u  \mu_{t,z}(\vd u)\right)^2\right),
\end{IEEEeqnarray*}
which is bounded by Assumption~\ref{ass}.
\end{proof}

\subsection{Markovian embedding}

Thanks to Proposition~\ref{T:filtering}, we can view
the original problem \eqref{E:OSR} as an optimal stopping problem for a process $Z$ having an SDE representation 
\begin{IEEEeqnarray*}{rCl} 
\left\{\begin{array}{ll}
\vd Z_t = \frac{ h(t,Z_t) - Z_t}{1-t} \ud t + \ud \hat W_t\,, \;  0 \leq t < 1,\\
Z_0=0.\end{array}\right.
\end{IEEEeqnarray*}
To study this stopping problem we embed it into a Markovian framework, i.e. we
define a Markovian value function 
\begin{IEEEeqnarray}{rCl} \label{E:OSMZ}
v(t,z) = \sup_{\tau \in \mathcal{T}^Z_{1-t}} \E\lt[ Z^{t,z}_{t+\tau} \rt], \quad (t,z) \in [0,1) \times \R,
\end{IEEEeqnarray}
where $\mathcal{T}^Z_{1-t}$ denotes the set of stopping times with respect to the process $Z=Z^{t,z}$ defined by 
\begin{IEEEeqnarray}{rCl} \label{E:ZMark}
\left\{
\begin{array}{ll}
\vd Z_{t+s} = \frac{ h(t+s,Z_{t+s}) - Z_{t+s}}{1-(t+s)} \ud s + \ud \hat W_{t+s}\,, &  0 \leq s < 1-t, \\
Z_t=z, & z\in \R.
\end{array}
\right.
\end{IEEEeqnarray}
Clearly, $v(t,z) \geq z$ for all $(t,z) \in [0,1)\times \R$.
Let us define the continuation region
\begin{IEEEeqnarray*}{rCl}
\mathcal{C} &:=& \{ (t,z) \in [0, 1)\times \R \,:\, v(t,z) > z \}
\end{IEEEeqnarray*}
and the stopping region
\begin{IEEEeqnarray*}{rCl}
\mathcal{D} &:=& \lt([0,1]\times \R \rt)\setminus \mathcal{C}.
\end{IEEEeqnarray*}
In addition, let $\tau_{\mathcal{D}} :=\inf \{s\geq0 \,:\, (t+s,Z^{t,z}_{t+s}) \in \mathcal{D}\}$ be the first entry time to the stopping region. 

%

\begin{theorem}
\label{T:OS} The following properties hold.
\begin{enumerate}[(i)]
\item
The stopping time $\tau_{\mathcal{D}}$ is optimal for the optimal stopping problem \eqref{E:OSMZ}.
\item
The value function $v$ is continuous on $[0,1)\times\R$.
\item
The value function $v$ satisfies
\begin{IEEEeqnarray}{rCl}\label{E:fbp}
\lt\{
\begin{array}{ll}
\pt_1 v + \mathcal L v = 0, & (t,z) \in \mathcal{C}, \\
v(t,z) = z, & (t,z) \in \mathcal{D},
\end{array}
\rt.
\end{IEEEeqnarray}
where $\mathcal L=\frac{h(t,z)-z}{1-t} \pt_2  + \frac{1}{2} \pt^2_2 $.
\end{enumerate}
\end{theorem}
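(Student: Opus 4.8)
The plan is to place the stopping problem \eqref{E:OSMZ} within the general theory of Markovian optimal stopping, so that (i)--(iii) emerge as consequences of (a) integrability of the gain, (b) continuity of $v$, and (c) smoothness of the coefficients away from the pinning time. First I would record the integrability that makes the problem well posed and supplies the uniform integrability needed later for optional sampling. Conditioning on $X=x$, the process $Z$ in \eqref{E:ZMark} is an ordinary Brownian bridge terminating at $x$, so $\sup_{t\le s<1}|Z_s|$ is dominated by $|x|$ plus the supremum of a standard bridge; since the latter has finite expectation and $\mu$ has a finite first moment, $\E\big[\sup_{t\le s<1}|Z_s|\big]<\infty$. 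Hence the family $\{Z^{t,z}_{t+\tau}\}_{\tau}$ is dominated by an integrable random variable, is in particular uniformly integrable, and $z\le v(t,z)\le \E[\sup_{t\le s<1} Z^{t,z}_{s}]<\infty$.

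The core of the argument is the continuity claim (ii), and this is the step I expect to be the main obstacle, both because of the time-inhomogeneity of \eqref{E:ZMark} and because only a first moment of $\mu$ is assumed (so no $L^2$ bounds are available). Fix $t_0\in[0,1)$. Proposition~\ref{T:Lip} furnishes a Lipschitz constant for $h(t,\cdot)$ on $[0,t_0]\times\R$, and since $1/(1-t)\le 1/(1-t_0)$ there, the whole drift is Lipschitz in $z$ with some constant $L$ on that strip. Coupling two copies $Z^{t,z_1}$ and $Z^{t,z_2}$ through the same Brownian motion cancels the martingale parts, and Gronwall's inequality gives $|Z^{t,z_1}_{t+s}-Z^{t,z_2}_{t+s}|\le e^{Ls}|z_1-z_2|$ for $t+s\le t_0$; taking suprema over stopping times shows that $z\mapsto v(t,z)$ is Lipschitz, uniformly in $t\in[0,t_0]$. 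Continuity in time I would obtain from joint continuous dependence of the flow $Z^{t,z}$ on the pair $(t,z)$ --- again a Gronwall estimate on $[0,t_0]$ via Proposition~\ref{T:Lip} --- together with the domination above: transporting a near-optimal rule from one initial datum to a nearby one changes the expected payoff by a vanishing amount, giving lower semicontinuity, while the same flow-continuity and uniform integrability give upper semicontinuity. Because we only claim continuity on $[0,1)\times\R$, the blow-up of the drift as $t\uparrow 1$ is harmless: every estimate is carried out on a strip $[0,t_0]$ with $t_0<1$.

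With $v$ continuous, property (i) follows from the standard Snell-envelope description of the value: $s\mapsto v(t+s,Z^{t,z}_{t+s})$ is a supermartingale dominating the gain $s\mapsto Z^{t,z}_{t+s}$, the set $\mathcal{D}$ is closed (it is the zero set of the continuous function $v(t,z)-z$), and the uniform integrability legitimises the optional-sampling step, whence $\tau_{\mathcal{D}}$ is optimal, the key point being that $s\mapsto v(t+s,Z_{t+s})$ is in fact a martingale up to $\tau_{\mathcal{D}}$. Finally, for (iii), on $\mathcal{D}$ the identity $v(t,z)=z$ holds by the very definition of $\mathcal{C}=\{v>z\}$; on the open set $\mathcal{C}$ the martingale property just noted shows, via a mean-value identity $v(t,z)=\E[v(t+s,Z_{t+s})]$ for small $s$, that $v$ solves $\pt_1 v+\mathcal{L}v=0$ in the distributional sense. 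Since the coefficient $(h(t,z)-z)/(1-t)$ is smooth on $(0,1)\times\R$ --- the Gaussian weighting in \eqref{E:h} renders all $u$-moments finite, so $h$ is $C^\infty$ there --- and $\mathcal{L}$ is uniformly parabolic on every strip $[0,t_0]\times\R$, interior parabolic regularity upgrades $v$ to a classical $C^{1,2}$ solution on $\mathcal{C}$, which yields the free-boundary system \eqref{E:fbp}.
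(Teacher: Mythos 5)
Your treatment of parts (i) and (iii) matches the paper's: the domination $\E[\sup_s|Z_{t+s}|]\leq\E[\sup_s|Z'_{t+s}|+|X|]<\infty$ (with $Z'$ a bridge pinning at $0$) is exactly the paper's integrability step, and, given continuity of $v$, the optimality of $\tau_{\mathcal D}$ and the free-boundary system follow from the standard references, as you say. The problem is part (ii), which is indeed the crux, and your argument for it has a genuine gap. You establish the flow estimate $|Z^{t,z_1}_{t+s}-Z^{t,z_2}_{t+s}|\leq e^{Ls}|z_1-z_2|$ only for $t+s\leq t_0$, with $L=L(t_0)$ supplied by Proposition~\ref{T:Lip}, and then assert that ``taking suprema over stopping times'' gives the Lipschitz property of $v(t,\cdot)$ on $[0,t_0]$. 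But the admissible stopping times in \eqref{E:OSMZ} range over all of $[0,1-t]$, not $[0,t_0-t]$: the payoff $Z^{t,z}_{t+\tau}$ is evaluated at times arbitrarily close to the pinning time $1$, where the spatial Lipschitz constant of the drift (and hence $L(t_0)$ as $t_0\nearrow1$) blows up. So your claim that ``every estimate is carried out on a strip $[0,t_0]$ with $t_0<1$'' is not true for the value function itself --- $v(t,z)$ with $t\leq t_0$ depends on the dynamics on the whole of $[t,1)$ --- and neither the spatial Lipschitz bound nor the ``transport a near-optimal rule'' argument for time-continuity goes through as written.

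The paper circumvents exactly this difficulty with a two-sided approximation: from below by $v_\ep$, the value when stopping is restricted to $[t,1-\ep]$ (there your strip argument is legitimate and gives continuity), and from above by $v^\ep$, the value when the pinning point is revealed at time $1-\ep$, which is an optimal stopping problem with horizon $1-\ep$ and a Lipschitz terminal payoff built from the known-pinning-point value functions $v_r$ of \eqref{E:known}, hence also continuous. The gap is controlled by $0\leq v^\ep-v_\ep\leq\E[\sup_{1-\ep-t\leq s\leq 1-t}Z^{t,z}_{t+s}-Z^{t,z}_{1-\ep}]\to0$, so $v$ is squeezed between continuous functions and is itself continuous. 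If you want to keep your coupling approach, you would need to show that the coupled difference of two bridges remains controlled (in a suitable averaged sense) up to time $1$ despite the exploding drift --- for instance by exploiting the contraction coming from the mean-reverting part of the drift against the posterior-variance term --- but that is a substantive extra argument that your proposal does not supply.
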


\begin{proof}
\begin{enumerate}[(i)]
\item
Since the pinning point $X$ is integrable, we have that 
\begin{IEEEeqnarray*}{rCl} 
\E\left[\sup_{0\leq s\leq 1-t} \vert Z_{t+s}\vert\right]\leq \E\left[\sup_{0\leq s\leq 1-t} \vert Z'_{t+s}\vert + \vert X\vert\right]<\infty,
\end{IEEEeqnarray*}
where $Z'$ is a Brownian bridge pinning at 0. Consequently, standard results from optimal stopping theory (see \cite[Appendix D]{KS2} or \cite{PS06}) imply that $\tau_{\mathcal{D}}$ is optimal.
\item
For the continuity of $v$, first define
\begin{IEEEeqnarray*}{rCl} 
v_\ep(t,z) = \sup_{\tau \in \mathcal{T}^Z_{1-t-\ep}} \E\lt[ Z^{t,z}_{t+\tau} \rt], \quad (t,z) \in [0,1) \times \R.
\end{IEEEeqnarray*}
Then $v_\ep$ is the value function in the case when stopping is restricted to the time interval $[t,1-\ep]$.
On this interval, the drift of $Z$ is Lipschitz in the spatial variable by Proposition~\ref{T:Lip},
so standard methods can be applied to show that 
$v_\ep$ is continuous (see for example \cite[Chapter 3]{K}). 

Next, letting $v^\ep$ denote the value function in a set-up where all information (i.e. the pinning point) is revealed 
at time $1-\ep$; we clearly have $v\leq v^\ep$. Moreover, $v^\ep$ is the value function of an optimal stopping problem
for $Z$ with horizon $T:=1-\ep$ and payoff function 
\begin{IEEEeqnarray*}{rCl} 
g^\ep(t,z)=\left\{\begin{array}{ll}
z, & t\in[0,T),\\
\int_\R v_r(T,z)\mu_{1-\ep,z}(\ud r), & t=T,\end{array}\right.
\end{IEEEeqnarray*}
where $v_r$ is the value function corresponding to stopping a Brownian motion pinning at $r$ at time 1, see
\eqref{E:known}. 
It is straightforward to check that the Lipschitz property of $r\mapsto v_r(t,z)$ with $(t, z)$ fixed together with Assumption~\ref{ass} imply that $g^\ep$ is Lipschitz continuous, and hence standard methods can again be applied to show that $v^\ep$ is continuous.
Moreover, note that
\begin{IEEEeqnarray*}{rCl} 
0 &\leq& v^\ep(t,z)-v_\ep(t,z) \leq \E\left[\sup_{1-\ep-t\leq s\leq 1-t}Z^{t,z}_{t+s}-Z^{t,z}_{1-\ep}\right]\to 0
\end{IEEEeqnarray*}
as $\ep\to 0$ by dominated convergence.
Since $v$ is sandwiched between the continuous functions $v_\ep$ and $v^\ep$, it follows that $v$ is continuous. 
\item
The fact that $v$ satisfies \eqref{E:fbp} is a standard consequence of the continuity of $v$ and the Markov property, 
see e.g. \cite[Theorem 2.7.7]{KS2}.
\end{enumerate}
\end{proof}

\begin{remark}
It is important to note that $Z^{t,z}$ can be interpreted as a Brownian bridge started at time $t$ from $z$ and with the pinning point having the prior $\mu_{t,z}$. 
Theorem~\ref{T:OS} yields that by determining $\mathcal{C}$ and $\mathcal{D}$ we solve the optimal stopping problem for every such Brownian bridge $Z^{t,z}$. In this way, the continuation region and the stopping region depend on both the starting point and the prior distribution.
In particular, the regions $\mathcal C$ and $\mathcal D$ would change if the starting point $z$ was shifted without altering the 
prior distribution accordingly.
\end{remark}


We end this section by introducing some terminology useful for interpreting stopping decisions.
If $(t,z) \in \mathcal{D}$, we call $(t,z)$ a \emph{stopping point}. We will say that a stopping point is 
\begin{itemize}
\item
a \emph{stop-loss} if, for all $\epsilon > 0$, the intersection $\{(t,z')\,:\, z'\in (z, z+\epsilon) \}\cap\mathcal{C}\neq \emptyset$;
\item
\emph{too-good-to-persist}, if, for all $\epsilon > 0$, the intersection $\{(t,z')\,:\, z'\in (z-\epsilon, z) \}\cap\mathcal{C}\neq \emptyset$.
\end{itemize}

\section{Structural properties}

In this section, we provide some structural properties of the continuation region and the stopping region. We first give bounds for the 
value function, which translate into lower estimates of the stopping region and the continuation region.

\subsection{Bounds for the value function}

%

\begin{proposition}[Bounds for the value function] \label{T:VEC0}
For $r\in\R$, let $v_r$ and $v_{-r}$ be the value functions (given in \eqref{E:known} above) for known pinning points $r$ and $-r$, respectively. 
\begin{enumerate}[(i)]
\item \label{T:vrC}
If $\supp \mu\subseteq (-\infty,r]$, then $v \leq v_r.$ 
Consequently, $\mathcal{D}_r:=\{ (t,z) \in [0,1]\times \R \,:\, z \geq  r + \beta \sqrt{1-t}\} \subseteq \mathcal{D}$.
\item \label{T:v-rC}
If $\supp \mu\subseteq [-r,\infty)$, then $v \geq v_{-r}.$ Consequently,
$\mathcal{C}_{-r}:=\{ (t,z) \in [0,1)\times \R \,:\, z <  -r + \beta \sqrt{1-t}\} \subseteq \mathcal{C}$.
\end{enumerate}
\end{proposition}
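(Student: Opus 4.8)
The plan is to derive the two value-function inequalities by a pathwise comparison of diffusions driven by a common Brownian motion, and then read off the stated inclusions of the stopping and continuation regions from the elementary bound $v(t,z)\geq z$. I treat part~(i) in detail; part~(ii) is entirely symmetric, so I only indicate the sign changes.

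First I would pass to the innovation representation \eqref{E:ZP}, in which $\vd Z_{t+s}=\frac{h(t+s,Z_{t+s})-Z_{t+s}}{1-(t+s)}\ud s+\ud\hat W_{t+s}$ with $\hat W$ an $\F^Z$-Brownian motion. Since the posterior $\mu_{t,z}$ in \eqref{E:mu} has a density with respect to $\mu$, its support lies in $\supp\mu\subseteq(-\infty,r]$, so the drift coefficient satisfies $h(u,y)=\int_\R u'\,\mu_{u,y}(\vd u')\leq r$ for every $(u,y)$. Fixing $(t,z)$, I would introduce on the same probability space, driven by the same $\hat W$, the classical bridge pinning at $r$, namely the strong solution of $\vd\bar Z_{t+s}=\frac{r-\bar Z_{t+s}}{1-(t+s)}\ud s+\ud\hat W_{t+s}$ with $\bar Z_t=z$. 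The drift of $Z$ is then pointwise dominated by that of $\bar Z$, and by Proposition~\ref{T:Lip} both drifts are Lipschitz in the spatial variable on every strip $[t,1-\ep]\times\R$; hence the standard comparison theorem for one-dimensional SDEs yields $Z_{t+s}\leq\bar Z_{t+s}$ for all $s\in[0,1-t-\ep]$. Letting $\ep\downarrow0$ and using that the two bridges converge a.s.\ to their (finite) pinning points, with $Z_1=X\leq r=\bar Z_1$, extends the ordering to all $s\in[0,1-t]$.

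To convert this into the value inequality I would identify the relevant filtrations. For the classical bridge the filtration generated by $\bar Z$ coincides with $\F^{\hat W}$, and by the innovation property the same holds for $Z$, so $\F^Z=\F^{\hat W}=\F^{\bar Z}$; consequently every $\F^Z$-stopping time $\tau$ is admissible in the problem defining $v_r$. For such $\tau$ the pathwise domination gives $\Ex{Z^{t,z}_{t+\tau}}\leq\Ex{\bar Z^{t,z}_{t+\tau}}\leq v_r(t,z)$, and taking the supremum over $\tau$ yields $v(t,z)\leq v_r(t,z)$. For part~(ii) I would run the same coupling with $\bar Z$ replaced by the bridge pinning at $-r$; now $h\geq -r$ reverses the drift domination, giving $Z_{t+s}\geq\bar Z_{t+s}$ and hence $v\geq v_{-r}$.

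Finally, the regional inclusions follow from $v(t,z)\geq z$. On $\mathcal{D}_r=\{(t,z):z\geq r+\beta\sqrt{1-t}\}$ one has $v_r(t,z)=z$, so $z\leq v(t,z)\leq v_r(t,z)=z$ forces $v(t,z)=z$ and thus $(t,z)\in\mathcal{D}$; on $\mathcal{C}_{-r}=\{(t,z):z<-r+\beta\sqrt{1-t}\}$ one has $v_{-r}(t,z)>z$, so $v(t,z)\geq v_{-r}(t,z)>z$ gives $(t,z)\in\mathcal{C}$. The one genuinely delicate point is the pathwise comparison: the drift $\frac{h(t,z)-z}{1-t}$ is only locally Lipschitz and blows up as $t\uparrow1$, so the comparison theorem must be applied on the strips $[t,1-\ep]\times\R$ and then passed to the limit, which I would justify via the a.s.\ convergence of both bridges to finite pinning points. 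I would also verify carefully that the innovation Brownian motion $\hat W$ generates $\F^Z$, since this is precisely what makes an $\F^Z$-stopping time legitimate for the auxiliary known-pinning problem.
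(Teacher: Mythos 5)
Your argument is correct, but it takes a genuinely different route from the paper in part (i). For $v\leq v_r$ the paper does not couple $Z$ with an auxiliary bridge at all: it applies It\^o's formula to $t\mapsto v_r(t+s,Z^{t,z}_{t+s})$ along the \emph{actual} process $Z$, and uses the explicit free-boundary characterisation of $v_r$ (namely $\pt_1 v_r+\mathcal L_r v_r\leq 0$ together with $\pt_2 v_r\geq 0$ and $h\leq r$) to conclude that this is a supermartingale, whence $\E[Z^{t,z}_{t+\tau}]\leq\E[v_r(t+\tau,Z^{t,z}_{t+\tau})]\leq v_r(t,z)$ for every $\F^Z$-stopping time $\tau$. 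That verification argument sidesteps entirely the two delicate points you flag: it needs no SDE comparison theorem near the singular terminal time and no identification $\F^Z=\F^{\hat W}=\F^{\bar Z}$, since the candidate stopping time never has to be re-interpreted as a stopping time for an auxiliary process. Your coupling proof buys something in return: it uses only the \emph{definition} of $v_r$ as a value function, not its explicit formula or regularity, so it would survive in settings where $v_r$ is not known in closed form; but it does lean on the strong-solvability of the filtering SDE (Lipschitz drift via Proposition~\ref{T:Lip} on strips $[t,1-\ep]$) to justify both the comparison theorem and the filtration identity, and you are right that these must be checked. For part (ii) your argument is essentially the paper's coupling, executed slightly more cleanly: the paper compares the two hitting times of $\mathcal D_{-r}$ and invokes the monotonicity of $t\mapsto -r+\beta\sqrt{1-t}$, whereas you evaluate both processes at the single stopping time $\tau'_{\mathcal D_{-r}}$ and use pathwise domination directly, which avoids the boundary-monotonicity step. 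The final deduction of the inclusions $\mathcal D_r\subseteq\mathcal D$ and $\mathcal C_{-r}\subseteq\mathcal C$ from $v\geq z$ matches the paper.
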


\begin{proof}
\begin{enumerate}
\item[(i)]
Let $t\in [0,1)$, $z\in \R$, and denote by 
\begin{IEEEeqnarray*}{rCl}
\mathcal L_r=\frac{r-z}{1-t} \pt_2  + \frac{1}{2} \pt^2_2 
\end{IEEEeqnarray*}
the differential operator associated with a bridge pinning at $r$. Applying It\^{o}'s formula to $v_r(t+s, Z^{t,z}_{t+s})$ and taking expectations at a stopping time $\tau \in \mathcal{T}^Z_{1-t}$, we have
\begin{IEEEeqnarray*}{rCl}
v_r(t,z) &=& \E\lt[ v_r(t+ \tau, Z^{t,z}_{t+\tau}) \rt] - \E\lt[ \int_0^\tau \pt_2 v_r(t+u, Z^{t,z}_{t+u})\ud \hat W_{t+u} \rt]\\
&&- \E \bigg[ \int_0^\tau \big(\pt_1 v_r+\mathcal L v_r)(t+u, Z^{t,z}_{t+u})  
 \Ind_{\{  Z^{t,z}_{t+u}   \neq b_r(t+u)\}} \ud u \bigg] \\
&\geq& \E \lt[ v_r(t+\tau, Z^{t,z}_{t+\tau}) \rt] \\
&&- \E \bigg[ \int_0^\tau \big(\pt_1 v_r+\mathcal L_r v_r)(t+u, Z^{t,z}_{t+u})  
 \Ind_{\{  Z^{t,z}_{t+u}   \neq b_r(t+u)\}} \ud u \bigg] \\
&\geq& \E\lt[ v_r(t+\tau, Z^{t,z}_{t+\tau}) \rt]\geq  \E\lt[ Z^{t,z}_{t+\tau} \rt]. \IEEEyesnumber \label{E:vrZ}
\end{IEEEeqnarray*}
In the above, the first inequality follows from the fact that $h \leq r$ and an application of the optional sampling theorem, the second inequality follows from the fact that the integrand inside the second expectation is non-negative, and the last one 
holds because $v_r(t,z)\geq z$ for all $(t,z) \in [0,1]\times\R$.  
Hence, since $\tau$ was arbitrary, from \eqref{E:vrZ} we obtain
\begin{IEEEeqnarray*}{rCl}
v_r(t,z) \geq \sup_{\tau \in \mathcal{T}^Z_{1-t}} \E[Z^{t,z}_{t+\tau}] = v(t,z).
\end{IEEEeqnarray*}
\item[(ii)]

Let $Z$ be a Brownian bridge started at time $t\in [0,1)$ from $Z_t=z$ and with the pinning point $Z_1 \sim \mu_{t,z}$. Also, let $Z'$ be  a Brownian bridge started at time $t\in [0,1)$ from $Z'_t=z$ and pinning at $Z'_1=-r$. Moreover, we can choose a probability space so that
\begin{IEEEeqnarray*}{rCl}
Z_{t+s} \geq Z'_{t+s} \quad \text{for all $s\in [0,1-t]$.} 
\end{IEEEeqnarray*}
Consequently, taking $\tau_{D_{-r}} = \inf \{s\geq0\,:\, Z_{t+s}\in \mathcal{D}_{-r} \}$ and $\tau'_{\mathcal{D}_{-r}}= \inf \{s \geq\,:\, Z'_{t+s} \in \mathcal{D}_{-r} \}$, we have $\tau_{D_{-r}} \leq\tau'_{\mathcal{D}_{-r}}$. Therefore, since $t\mapsto -r+ \beta \sqrt{1-t}$ is decreasing,  
\begin{IEEEeqnarray*}{rCl}
v(t,z) &\geq& \E[Z_{t+\tau_{\mathcal{D}_{-r}}}] \geq \E[Z'_{t+\tau'_{\mathcal{D}_{-r}}}] = v_{-r}(t,z).
\end{IEEEeqnarray*}
\end{enumerate}
\end{proof}

\subsection{A condition for a single-upper boundary strategy}

Recognising the complicated geometry of the stopping region for a general prior, we now turn to the problem of
identifying a condition on the prior that guarantees a one-sided stopping region.

\begin{theorem}[Condition for a single stopping boundary] \label{T:DY}
\item
\begin{enumerate}[(i)]
\item \label{Tp:DDec}
If $h(t,z)-z$ is decreasing in $z$ (equivalently, $f(t,y) - y/(1+t)$ is decreasing in $y$), then there exists a boundary $b:[0,1) \to [-\infty,\infty]$ such that $\mathcal{C} = \{ (t,z) \in [0,1) \times \R \,:\, z < b(t) \}$.
\item \label{Tp:DInc}
If $h(t,z)-z$ is increasing in $z$ (equivalently, $f(t,y) - y/(1+t)$ is increasing in $y$), then there exists a boundary $b:[0,1) \to [-\infty,\infty]$ such that $\mathcal{C} = \{ (t,z) \in [0,1) \times \R \,:\, z > b(t) \}$.
\end{enumerate}
\end{theorem}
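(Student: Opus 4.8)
The plan is to convert the monotonicity hypothesis on the drift coefficient $\psi(t,z):=h(t,z)-z$ into monotonicity of the map $z\mapsto v(t,z)-z$ at each fixed $t$, by coupling two bridges started from different spatial points; the claimed one-sided structure of $\mathcal C$ then follows at once from the continuity of $v$. I will describe (i) in detail, (ii) being entirely symmetric.

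First I would fix $t\in[0,1)$ and two starting points $z<z'$, and realise $Z:=Z^{t,z}$ and $Z':=Z^{t,z'}$ on a common probability space, driven through \eqref{E:ZMark} by the same Brownian motion $\hat W$. For $D_s:=Z'_{t+s}-Z_{t+s}$ the driving noise cancels, so $D$ solves the pathwise random ODE
\begin{IEEEeqnarray*}{rCl}
\dot D_s &=& \frac{\psi(t+s,Z_{t+s}+D_s)-\psi(t+s,Z_{t+s})}{1-(t+s)},\qquad D_0=z'-z>0.
\end{IEEEeqnarray*}
By Proposition~\ref{T:Lip} the map $\psi(t,\cdot)$ is Lipschitz uniformly on $[0,t_0]$ for every $t_0<1$, so a Gronwall estimate keeps $D_s>0$ on $[0,1-t)$ and the ordering $Z_{t+s}<Z'_{t+s}$ is preserved. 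Under the hypothesis of (i) the function $\psi(t+s,\cdot)$ is decreasing, so the numerator above is $\leq 0$ whenever $D_s>0$; hence $D$ is non-increasing and $0\leq D_s\leq z'-z$ for all $s\in[0,1-t]$, the endpoint following by continuity.

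Next I would let $\tau'$ be the optimal stopping time for the problem started at $(t,z')$, whose existence is guaranteed by Theorem~\ref{T:OS}(i), and use it as an admissible (generally suboptimal) stopping rule for the problem started at $(t,z)$; this is legitimate in the common filtration of $\hat W$, in which $Z$ remains Markov so that $v(t,z)$ is unchanged. Using the integrability established in the proof of Theorem~\ref{T:OS},
\begin{IEEEeqnarray*}{rCl}
v(t,z')-v(t,z) &\leq& \E[Z'_{t+\tau'}]-\E[Z_{t+\tau'}]=\E[D_{\tau'}]\leq z'-z,
\end{IEEEeqnarray*}
so that $v(t,z')-z'\leq v(t,z)-z$; that is, $\phi_t(z):=v(t,z)-z$ is non-increasing in $z$. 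Since $\phi_t\geq0$ and, by Theorem~\ref{T:OS}(ii), $\phi_t$ is continuous, the superlevel set $\{z:\phi_t(z)>0\}$ is open (by continuity) and downward closed (by monotonicity), hence an interval $(-\infty,b(t))$ with $b(t):=\sup\{z:v(t,z)>z\}\in[-\infty,\infty]$. This is precisely $\mathcal C=\{(t,z):z<b(t)\}$.

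For (ii) the hypothesis makes $\psi(t+s,\cdot)$ increasing, so the same coupling renders $D$ non-decreasing with $D_s\geq z'-z$; inserting the optimal stopping time for $(t,z)$ into the problem started at $(t,z')$ then gives $v(t,z')-z'\geq v(t,z)-z$, so $\phi_t$ is non-decreasing and $\mathcal C=\{(t,z):z>b(t)\}$ with $b(t):=\inf\{z:v(t,z)>z\}$. The equivalence of the conditions on $h$ and on $f$ is the change of variables $s=t/(1-t)$, $y=z/(1-t)$ underlying $h(t,z)=f(s,y)$, under which $z=y/(1+s)$ and hence $h(t,z)-z=f(s,y)-y/(1+s)$, with $z\mapsto y$ increasing at fixed $t$. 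I expect the main technical point to be the pathwise comparison step: verifying that the ordering of the two bridges persists and that $D$ stays appropriately bounded up to the pinning time, which is exactly where the uniform Lipschitz bound of Proposition~\ref{T:Lip} and a Gronwall argument on each interval $[0,t_0]$, $t_0<1$, are essential.
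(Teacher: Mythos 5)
Your proposal is correct and takes essentially the same route as the paper: both couple $Z^{t,z}$ and $Z^{t,z'}$ through the same driving Brownian motion, use the preserved path ordering together with the monotonicity of $h(t,z)-z$ to conclude that $z\mapsto v(t,z)-z$ is monotone, and then read off the one-sided continuation region. The only cosmetic difference is that the paper subtracts the optional-sampling identity $\E[Z^{t,z}_{t+\tau}]=z+\E[\int_0^\tau\frac{h-Z}{1-t-s}\,ds]$ for an arbitrary common stopping time and then takes the supremum, whereas you track the difference process $D$ and insert the optimal stopping time of one problem into the other — the same computation packaged differently.
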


\begin{proof}
\begin{enumerate}
\item[(i)]
Since
\begin{IEEEeqnarray*}{rCl}
\vd Z_s &=& \frac{h(s,Z_s)-Z_s}{1-s} \ud s + \ud \hat W_s,
\end{IEEEeqnarray*}
we have for a given stopping time $\tau$ that
\begin{IEEEeqnarray}{rCl}\label{E:single}
\E \lt[  Z^{t,z}_{t+\tau} \rt] &=& z +\E\lt[\int_0^{\tau}\frac{h(t+s,Z_{t+s}^{t,z})-Z^{t,z}_{t+s}}{1-t-s}\ud s \right]
\end{IEEEeqnarray}
by optional sampling.
Let $z_2 > z_1$. Then the trajectories of $Z^{t,z_2}$ and $Z^{t,z_1}$ do not cross before the end of 
time, so, by continuity, they never cross. Therefore \eqref{E:single} and 
the monotone decay of $h(t,z) - z$ in $z$ imply that 
\begin{IEEEeqnarray*}{rCl}
\E \lt[ Z^{t,z_1}_{t+\tau}\rt] -z_1\geq\E \lt[ Z^{t,z_2}_{t+\tau}\rt] -z_2.
\end{IEEEeqnarray*}
Taking the supremum over stopping times then yields
\begin{IEEEeqnarray*}{rCl}
v(t,z_1) -z_1  \geq v(t,z_2) -z_2.
\end{IEEEeqnarray*}
Consequently, $(t,z_2)\in \mathcal C$ implies $v(t,z_1) - z_1  \geq v(t,z_2)  -z_2>0$, so
then also $(t,z_1)\in\mathcal C$, which finishes the proof of the claim.
\item [(ii)]
The proof of the second claim is analogous to the first one, and therefore omitted.
\end{enumerate}
\end{proof}

One consequence of the structural property in Theorem~\ref{T:DY} is that the stopping region can be characterized by 
an integral equation for its boundary. To avoid technical considerations in connection with the general case, however, we choose to 
discuss this only in the special case of the normal prior, see Section~\ref{S:normal} below.

\section{Rich structure already in a simple case: the two-point distribution}

In this section, we study the two-point prior distribution case, i.e. the prior $\mu = p \delta_r + (1-p) \delta_l$, where $r>l$, $p \in (0,1)$. 
In what follows, we will work within the Markovian framework, allowing the process $Z$ to start at any value. Consequently, without loss of generality, we can assume that $\mu = p \delta_r + (1-p) \delta_{-r}$, where $r>0$, so that the support of $\mu$ is symmetric around 0.

By Proposition~\ref{T:filtering} we have
\begin{IEEEeqnarray*}{rCl}
\P(X=r \,|\, \F^Z_t)=\P(X=r \,|\, Z_t) = \pi (t,Z_t),
\end{IEEEeqnarray*}
where
\begin{IEEEeqnarray*}{rCl}
\pi(t,z) := \frac{\frac{p}{1-p}e^{\frac{2rz}{1-t}}}{\frac{p}{1-p}e^{\frac{2rz}{1-t}}+1}.
\end{IEEEeqnarray*}

\begin{proposition}[Lower bound for the value function] \label{T:VEC}
We have
\begin{IEEEeqnarray}{rCl}\label{E:lowerbound}
v(t,z)\geq \pi(t,z)  v_r(t,z) - (1-\pi(t,z))r .
\end{IEEEeqnarray}
Consequently, $Q_r:=\{ (t,z) \in [0,1)\times \R \,:\, z< \pi(t,z) v_r(t,z) - (1-\pi(t,z))r \} \subseteq \mathcal{C}$.
\end{proposition}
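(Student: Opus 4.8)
The plan is to build an explicit suboptimal stopping rule and bound its expected payoff from below by disintegrating over the two possible values of the pinning point. Concretely, I would apply to the observed process $Z^{t,z}$ the rule $\tau_r := \tau_{\mathcal{D}_r} = \inf\{s \geq 0 \,:\, Z^{t,z}_{t+s} \geq r + \beta\sqrt{1-(t+s)}\}$, i.e.\ the rule that is optimal for a Brownian bridge pinning at the \emph{known} point $r$, even though here the pinning point of $Z^{t,z}$ is random with posterior $\mu_{t,z}$. Since $\tau_r$ is a first entry time of a continuous $\mathcal{F}^Z$-adapted process into a closed region, it is an admissible stopping time, and suboptimality gives at once $v(t,z) \geq \E[Z^{t,z}_{t+\tau_r}]$.

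The key step is to evaluate the right-hand side by conditioning on the events $\{X = r\}$ and $\{X = -r\}$, which under $\mu_{t,z}$ carry probabilities $\pi(t,z)$ and $1 - \pi(t,z)$. The decisive observation is that, conditionally on $\{X = r\}$, the process $Z^{t,z}$ is exactly a Brownian bridge started at $(t,z)$ and pinning at $r$, and on this event $\tau_r$ is precisely the optimal rule for such a bridge; hence the conditional expected payoff equals the optimal value $v_r(t,z)$. On the complementary event $\{X = -r\}$ the process pins at $-r$, and at time $\tau_r$ the payoff is either the boundary value $r + \beta\sqrt{1-(t+\tau_r)} > 0$ (if the boundary is reached before time $1$) or the terminal value $-r$ (if it is not), so $Z^{t,z}_{t+\tau_r} \geq -r$ pathwise and the conditional expected payoff is at least $-r$. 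Here I only need this crude lower bound, which is why the estimate on $\{X=-r\}$ can be so rough. Combining the two contributions gives
\[
v(t,z) \geq \pi(t,z)\, v_r(t,z) + (1 - \pi(t,z))(-r),
\]
which is \eqref{E:lowerbound}. The inclusion $Q_r \subseteq \mathcal{C}$ then follows immediately: if $(t,z) \in Q_r$, the point $z$ lies strictly below this lower bound, forcing $v(t,z) > z$.

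The main obstacle, and the step deserving the most care, is the conditioning decomposition: one must justify that, under the posterior, the observed bridge may be realised by first drawing $X \in \{r,-r\}$ according to $\mu_{t,z}$ and then running a bridge pinning at the drawn value, and that the $\mathcal{F}^Z$-measurable rule $\tau_r$ interacts correctly with this disintegration so that the tower property applies and the conditional law on $\{X=r\}$ is genuinely that of a bridge pinning at $r$. This mixture representation follows from the filtering description in Proposition~\ref{T:filtering} together with the interpretation of $Z^{t,z}$ as a bridge with prior $\mu_{t,z}$. Once it is in place, the remaining estimates are elementary, using only the explicit form of $v_r$ in \eqref{E:known} and the positivity of the boundary $b_r$.
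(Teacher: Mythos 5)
Your proposal is correct and follows essentially the same route as the paper: both use the suboptimal rule $\tau_{\mathcal{D}_r}$, condition on $\{X=r\}$ and $\{X=-r\}$ under the posterior $\pi(t,z)\delta_r+(1-\pi(t,z))\delta_{-r}$, identify the conditional payoff on $\{X=r\}$ with $v_r(t,z)$, and use the pathwise bound $Z^{t,z}_{t+\tau_{\mathcal{D}_r}}\geq -r$ on $\{X=-r\}$. The extra care you devote to justifying the disintegration is a reasonable elaboration of what the paper leaves implicit.
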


\begin{proof}
The value $v(t,z)$ corresponds to optimally stopping a Brownian bridge started at time $t$ from $Z_t=z$ when the pinning point has the distribution $\pi(t,z)\delta_r + (1-\pi(t,z))\delta_{-r}$. Since the stopping time $\tau_{D_r}= \inf \{s \in [1-t]\,:\, (t+s,Z^{t,z}_{t+s})\in \mathcal{D}_r\}$ is  suboptimal, 
\begin{IEEEeqnarray*}{rCl}
v(t,z)&\geq& \E[Z^{t,z}_{t+\tau_{\mathcal{D}_r}}] \\
&=&  \E[Z^{t,z}_{t+\tau_{\mathcal{D}_r}}\vert X=r]\P(X=r)+\E[Z^{t,z}_{t+\tau_{\mathcal{D}_r}}\vert X=-r]\P(X=-r)\\
&\geq& \pi(t,z)  v_r(t,z) - (1-\pi(t,z))r .
\end{IEEEeqnarray*}
\end{proof}

We point out that the explicitness of the function $v_r$ makes it easy to plot the region $Q_r$, compare Figures 2 and 3 in which 
the region $Q_r$ and the regions $\mathcal D_r$ and $\mathcal C_{-r}$ are drawn for different sets of parameters.

\begin{remark}
An alternative way to find continuation points would be to use the 
(suboptimal) strategy $\tau=1-t$, i.e. to simply continue until the end of time.
This strategy, however, would yield the lower bound $\pi(t,z)r-(1-\pi(t,z))r$, which is also implied by
\eqref{E:lowerbound} using $v_r(t,z)\geq r$.
Another standard way to find a lower bound on the continuation region is to apply the infinitesimal generator of the underlying 
process to the payoff function, compare \cite[Chapter 10]{bO}. In the current case, however, that method would also yield 
the weaker lower bound $\pi(t,z)r-(1-\pi(t,z))r$ for the value function.
\end{remark}

\begin{figure}[!h]
\begin{subfigure}[b]{0.45\textwidth}
\begin{overpic}[scale=0.5, tics=5]{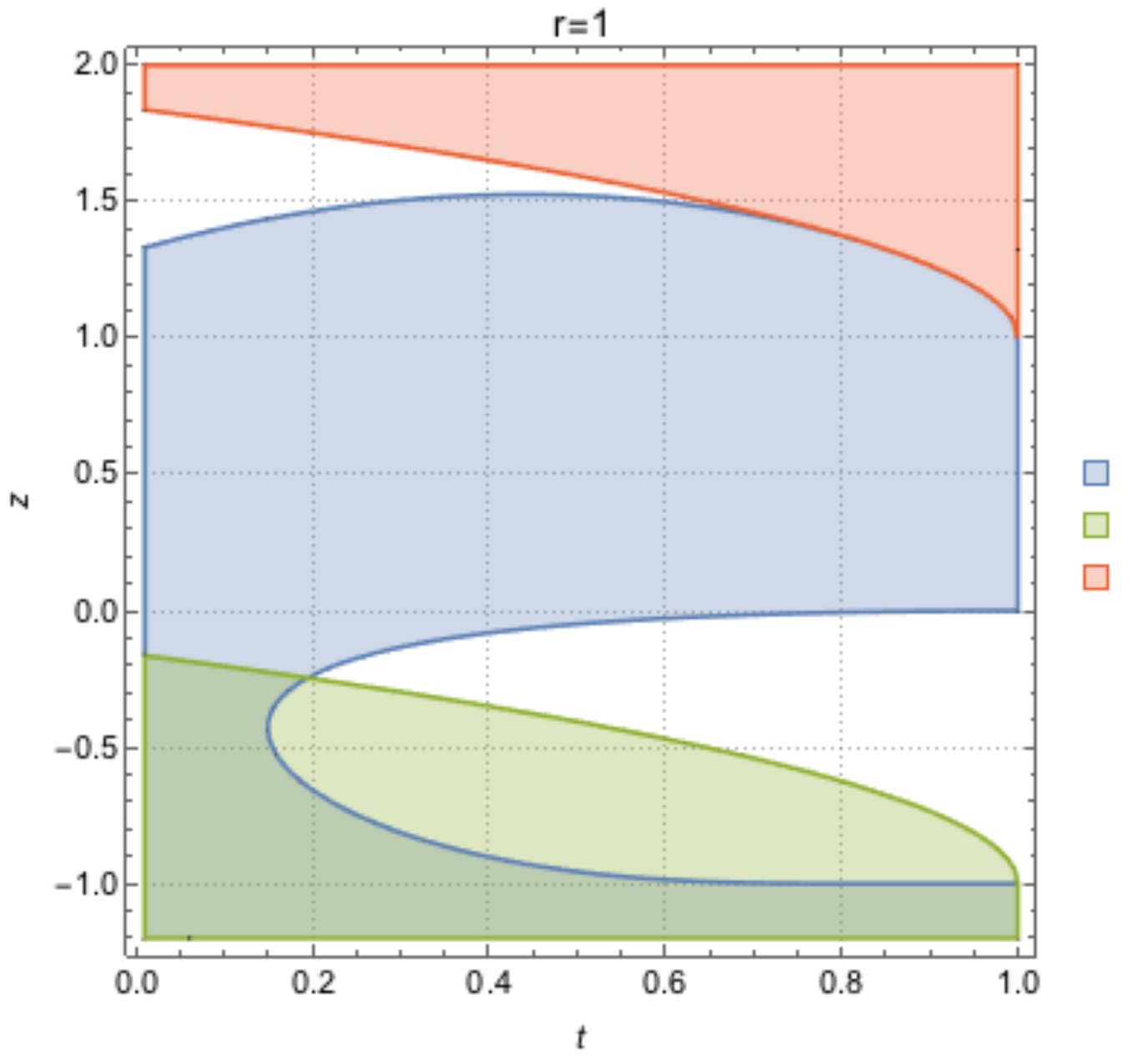}
\put(98, 50.6){\tiny $Q_r$}
\put(98, 46.0){\tiny $\mathcal{C}_{-r}$}
\put(98, 41.4){\tiny $\mathcal{D}_r$}
\end{overpic}
\caption{$r=1$}
\end{subfigure} \qquad
\begin{subfigure}[b]{0.45\textwidth}
\begin{overpic}[scale=0.5, tics=5]{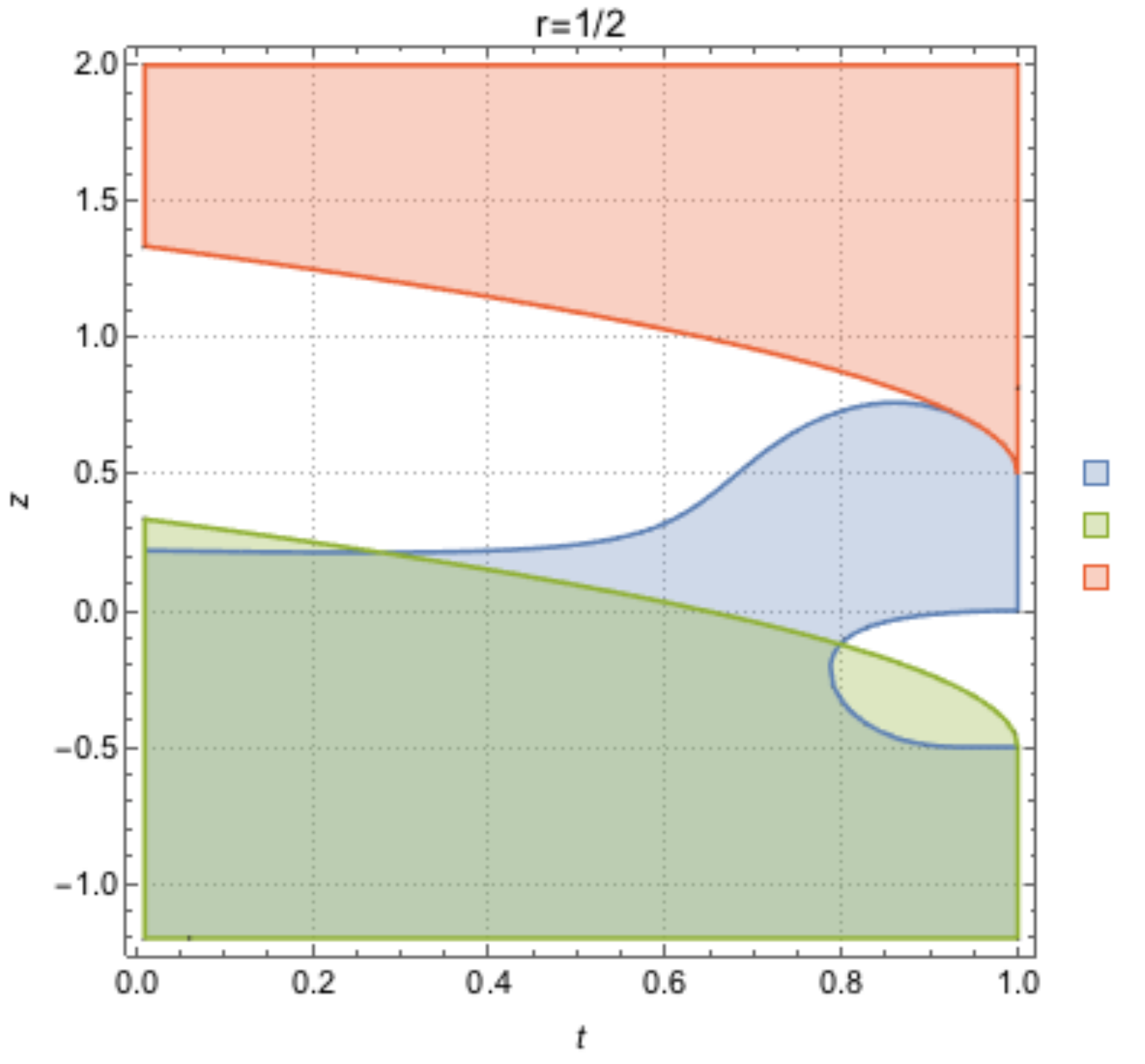}
\put(98, 50.6){\tiny $Q_r$}
\put(98, 46.0){\tiny $\mathcal{C}_{-r}$}
\put(98, 41.4){\tiny $\mathcal{D}_r$}
\end{overpic}
\caption{$r=1/2$}
\end{subfigure}
\caption{The regions $Q_r$, $\mathcal{C}_{-r}$, and $ \mathcal{D}_r$ in the symmetrically weighted case, i.e.~$p=1/2.$ Note that blue and green regions are subsets of $\mathcal C$, and the red region is contained in $\mathcal D$. The white areas consist of points of unknown 
type; by Proposition~\ref{T:RKP}, however, the lower white region is known to contain stopping points.}
\end{figure}

\begin{figure}[h]
\begin{subfigure}[b]{0.45\textwidth}
\begin{overpic}[scale=0.5, tics=5,trim=0 0 0 0, clip=true]{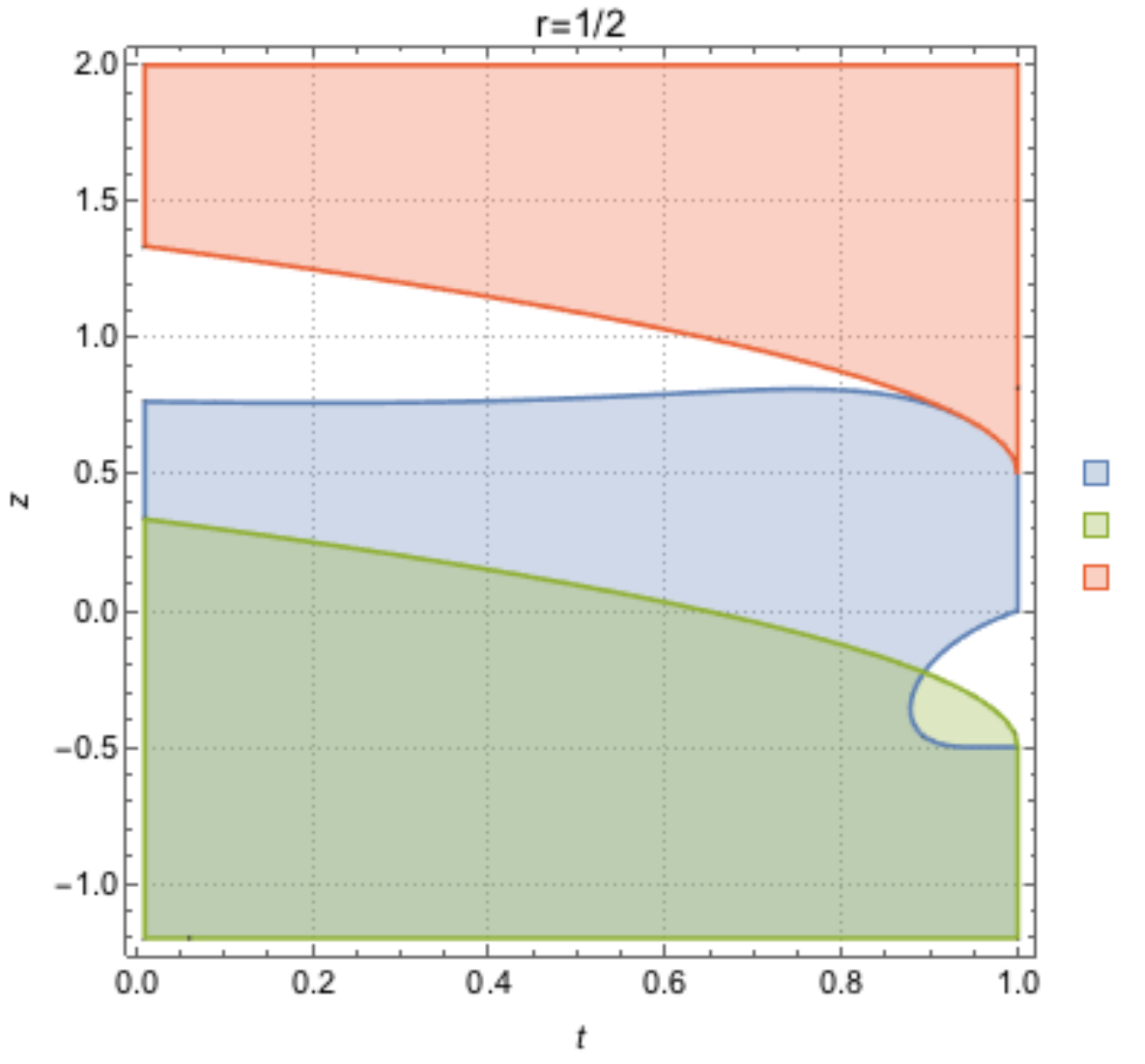}
\put(98, 50.6){\tiny $Q_r$}
\put(98, 46.0){\tiny $\mathcal{C}_r$}
\put(98, 41.4){\tiny $\mathcal{D}_r$}
\end{overpic}
\caption{$p=3/4$}
\end{subfigure} \qquad
\begin{subfigure}[b]{0.45\textwidth}
\begin{overpic}[scale=0.5, tics=5]{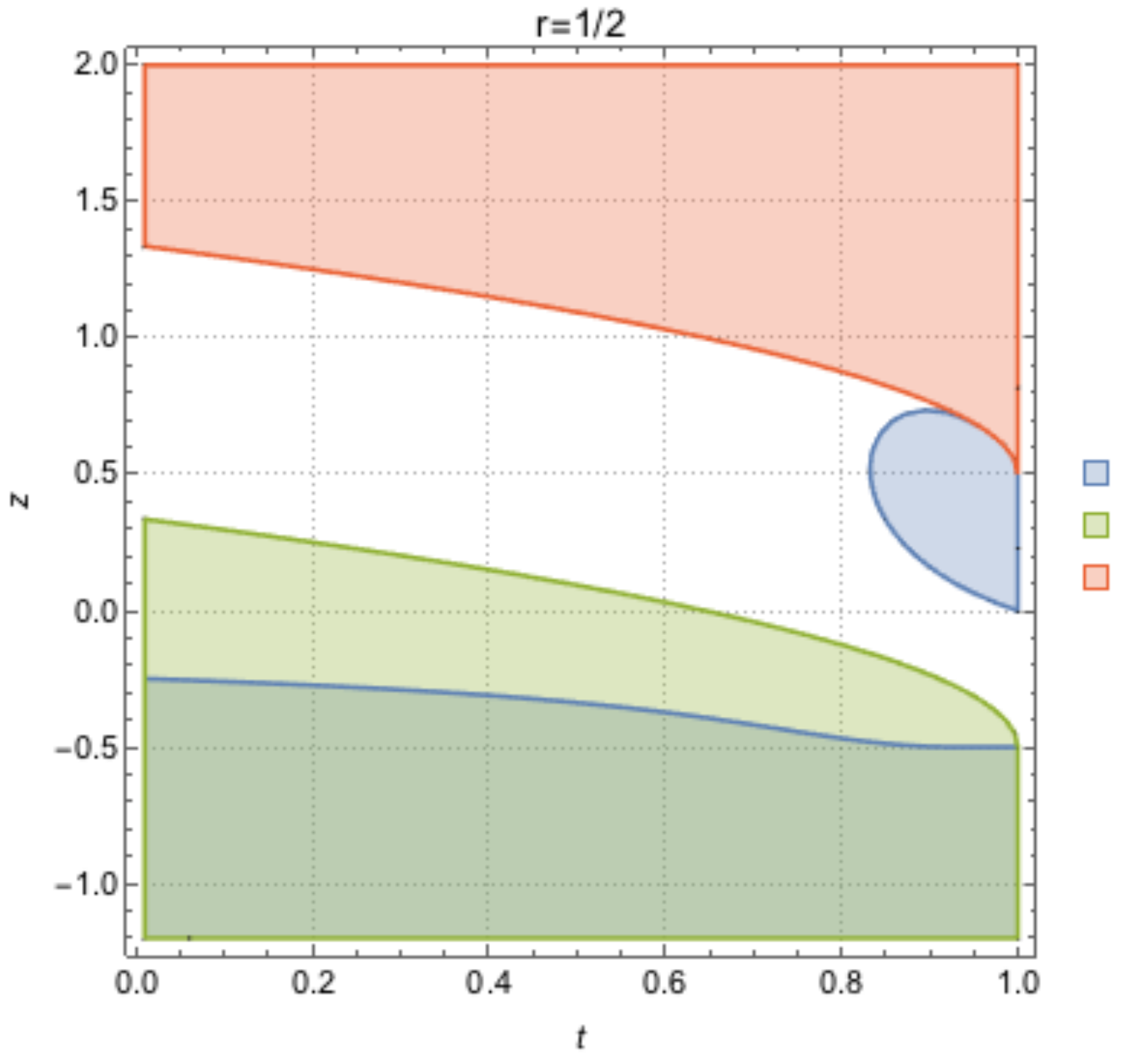}
\put(98, 50.6){\tiny $Q_r$}
\put(98, 46.0){\tiny $\mathcal{C}_r$}
\put(98, 41.4){\tiny $\mathcal{D}_r$}
\end{overpic}
\caption{$p=1/4$}
\end{subfigure}
\caption{The regions $Q_r$, $\mathcal{C}_{-r}$, and $ \mathcal{D}_r$ in non-symmetrically weighted cases with $r=1/2$.}
\end{figure}

Next, we address the limiting behaviour of the value function close to the terminal pinning time.

\begin{proposition}[Discontinuity of $v$ in the limit] 
For fixed $z\in\R$, the value function satisfies
\begin{IEEEeqnarray}{rCl}\label{E:asymp}
\lim_{t \nearrow 1} v(t, z) =  -r \Ind_{\{ z < -r\}} + z \Ind_{\{-r\leq z < 0 \}} +pr\Ind_{\{z =0\}}+ r \Ind_{\{0<z\leq r \}} + z \Ind_{\{ z>r \}}.
\end{IEEEeqnarray}
\end{proposition}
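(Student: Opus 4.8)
The heuristic behind \eqref{E:asymp} is that as $t\nearrow1$ the posterior $\pi(t,z)\delta_r+(1-\pi(t,z))\delta_{-r}$ degenerates: one has $\pi(t,z)\to\Ind_{\{z>0\}}+p\,\Ind_{\{z=0\}}$, so the pinning point becomes asymptotically determined by the sign of $z$, the sole exception being $z=0$. Since a bridge pinning at a \emph{known} point $\rho$ should realise $\max(z,\rho)$ in the limit, one expects $\lim_{t\nearrow1}v(t,z)=\E[\max(z,X)]$ under the limiting posterior, and a short case check shows this equals the right-hand side of \eqref{E:asymp} (in particular $p\max(0,r)+(1-p)\max(0,-r)=pr$ at $z=0$). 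The plan is to make this rigorous by sandwiching $v$ between matching upper and lower bounds.

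First I would record the one-dimensional asymptotics $\lim_{t\nearrow1}v_\rho(t,z)=\max(z,\rho)$ for every fixed $\rho\in\R$. For $z>\rho$ this is immediate from \eqref{E:known}, since $z\ge b_\rho(t)$ once $t$ is close to $1$ and hence $v_\rho(t,z)=z$; for $z\le\rho$ one uses the explicit continuation branch together with the Mills-ratio estimate $\Phi(-a)\sim(a\sqrt{2\pi})^{-1}e^{-a^2/2}$ with $a=(\rho-z)/\sqrt{1-t}\to\infty$, which shows the correction term is $O(1-t)\to0$. Combined with the elementary limit of $\pi(t,z)$ recorded above, this produces the limits of all the building blocks.

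For the upper bound I would condition on $X$: for any $\tau\in\mathcal{T}^Z_{1-t}$,
\[
\E\bigl[Z^{t,z}_{t+\tau}\bigr]=\pi(t,z)\,\E\bigl[Z^{t,z}_{t+\tau}\mid X=r\bigr]+(1-\pi(t,z))\,\E\bigl[Z^{t,z}_{t+\tau}\mid X=-r\bigr].
\]
Conditionally on $\{X=\rho\}$ the process $Z^{t,z}$ is a Brownian bridge pinning at $\rho$, and $\tau$ remains an admissible stopping time, so each conditional expectation is at most $v_\rho(t,z)$; taking the supremum over $\tau$ yields $v(t,z)\le\pi(t,z)v_r(t,z)+(1-\pi(t,z))v_{-r}(t,z)$. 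Letting $t\nearrow1$ and invoking the first step (noting that for $z<0$ one has $\pi(t,z)\to0$ while $v_r(t,z)$ stays bounded, so the product vanishes) gives exactly the right-hand side of \eqref{E:asymp} as a bound for $\limsup_{t\nearrow1}v(t,z)$, for every $z$ including $z=0$.

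The lower bound splits into the easy cases and the delicate point $z=0$, which I expect to be the main obstacle. For $z<-r$ and $z>0$, Proposition~\ref{T:VEC} gives $v(t,z)\ge\pi(t,z)v_r(t,z)-(1-\pi(t,z))r$, whose limit matches; for $-r\le z<0$ the trivial bound $v(t,z)\ge z$ already matches the upper bound. At $z=0$, however, Proposition~\ref{T:VEC} only delivers the weaker value $(2p-1)r<pr$, because the strategy behind it holds all the way down to $-r$ on $\{X=-r\}$. To repair this I would use a cut-loss strategy: fix $\delta\in(0,r)$ and stop at $\tau_\delta:=\inf\{s\ge0:Z^{t,0}_{t+s}\le-\delta\}\wedge(1-t)$. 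On $\{X=-r\}$, continuity forces the bridge from $0$ to $-r$ to cross $-\delta$, so $\E[Z^{t,0}_{t+\tau_\delta}\mid X=-r]=-\delta$ exactly; on $\{X=r\}$ the bridge from $0$ to $r$ has nonnegative mean $rs/(1-t)$ and fluctuations of order $O_{\P}(\sqrt{1-t})$, so it hits $-\delta$ with probability tending to $0$, whence $\liminf_{t\nearrow1}\E[Z^{t,0}_{t+\tau_\delta}\mid X=r]\ge r$. Since $\pi(t,0)=p$, this gives $\liminf_{t\nearrow1}v(t,0)\ge pr-(1-p)\delta$, and letting $\delta\downarrow0$ produces the matching bound $pr$. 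Verifying the vanishing hitting probability for the bridge to $r$ is the one quantitative estimate requiring care; everything else is sandwiching.
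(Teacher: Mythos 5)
Your proposal is correct and follows essentially the same route as the paper: the upper bound $v\le \pi v_r+(1-\pi)v_{-r}$ by conditioning on $X$, lower bounds from Proposition~\ref{T:VEC} away from zero, and the same cut-loss stopping rule $\inf\{s:Z^{t,0}_{t+s}\le-\delta\}\wedge(1-t)$ to obtain $\liminf_{t\nearrow1}v(t,0)\ge pr$. The only cosmetic deviation is that for $-r\le z<0$ you use the trivial bound $v\ge z$ where the paper invokes $v\ge v_{-r}$ from Proposition~\ref{T:VEC0}; both close the sandwich.
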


\begin{remark}
Note that the right-hand side of \eqref{E:asymp} is not continuous at $z=0$. Thus $v$ cannot in general be extended to a continuous function
on $[0,1]\times\R$.
\end{remark}

\begin{proof}
First note that
\begin{IEEEeqnarray}{rCl}\label{E:pi}
\pi(t,z) = \frac{\frac{p}{1-p}e^{\frac{2rz}{1-t}}}{\frac{p}{1-p}e^{\frac{2rz}{1-t}}+1}
\tend \left\{ 
\begin{array}{ll}
1, & z>0, \\
p, & z=0,\\
0, & z<0,
\end{array}\rt.
\end{IEEEeqnarray}
as $t\nearrow 1$. For $z>0$, letting $t\nearrow1$ in
\begin{IEEEeqnarray*}{rCl}
\pi v_r -r (1-\pi) \leq v \leq \pi v_r + (1-\pi)v_{-r}
\end{IEEEeqnarray*}
gives the result (here the lower bound follows from Proposition~\ref{T:VEC} and the upper bound follows from comparison with 
a case with full information about the pinning point).
Similarly, by Proposition~\ref{T:VEC0},
\begin{IEEEeqnarray*}{rCl}
v_{-r}
 \leq v \leq \pi v_r + (1-\pi)v_{-r},
\end{IEEEeqnarray*}
so letting $t\nearrow 1$ yields the desired limit for $z<0$.

Now, consider the remaining case $z=0$. Let $\hat{Z}$, $\check{Z}$ be Brownian bridges pinning at $r$ and $-r$, respectively, and introduce 
$\rho(t) := \P (\inf_{0\leq s \leq 1-t} \hat{Z}^{t,0}_{t+s}  \leq - \epsilon)$. 
Then $\rho(t) \to 0$ as $t\nearrow 1$, so a comparison with the suboptimal strategy 
$\tau_{-\ep}:=\inf\{s\in[0,1-t):Z^{t,0}_{t+s}\leq -\ep\}\wedge(1-t)$ yields
\begin{IEEEeqnarray*}{rCl}
v(t,0) &\geq& \pi(t,0)(1-\rho(t))r - \pi(t,0) \rho(t) \epsilon - (1-\pi(t,0))\epsilon \\
&\to& 
pr - (1-p)\epsilon 
\end{IEEEeqnarray*}
as $t \nearrow 1$. Thus, as $\epsilon$ was arbitrary, 
$\liminf_{t\nearrow1} v(t,0) \geq pr$.

On the other hand, letting $t\nearrow1$ in
\begin{IEEEeqnarray*}{rCl}
v(t,0) \leq \pi v_r(t,0) + (1-\pi)v_{-r}(t,0)
\end{IEEEeqnarray*}
yields $\limsup_{t\nearrow 1} v(t,0) \leq pr$. Thus we can conclude that $\lim_{t\nearrow 1} v(t,0) = pr$.
\end{proof}

\begin{proposition}[Disconnected stopping region] \label{T:RKP} The following properties hold.
\begin{enumerate}
\item [(i)]
Close to the terminal time, there is a region of continuation points as follows: for every $\ep>0$ there
exists $\delta>0$ such that $(1-\delta,1)\times (\ep,r-\ep)\subseteq \mathcal C$.
\item [(ii)]
There is a region close to the terminal time in which stopping points are dense in the following sense: for all $z \in (-r, 0)$ and for all $\epsilon >0$, the intersection $[1-\epsilon, 1) \times (z-\epsilon, z+\epsilon) \cap \mathcal{D} \neq \emptyset$.
\end{enumerate}
\end{proposition}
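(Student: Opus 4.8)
\emph{Part (i).} The plan is to feed the lower bound \eqref{E:lowerbound} into the crude estimate $v_r\ge r$ (obtained by simply continuing to the terminal time in the $r$-bridge). Combining the two gives $v(t,z)\ge \pi(t,z)v_r(t,z)-(1-\pi(t,z))r\ge (2\pi(t,z)-1)r$. Since $\pi(t,z)$ is increasing in $z$, on the strip $z\in(\ep,r-\ep)$ it is bounded below by $\pi(t,\ep)$, and $\pi(t,\ep)\to1$ as $t\nearrow1$ by \eqref{E:pi}. Hence, choosing $\delta$ so that $\pi(t,\ep)>1-\ep/(2r)$ for $t>1-\delta$, one obtains $(2\pi(t,z)-1)r>r-\ep>z$ uniformly over $z\in(\ep,r-\ep)$, i.e.\ $v(t,z)>z$ on the whole rectangle $(1-\delta,1)\times(\ep,r-\ep)$. (The claim is vacuous unless $\ep<r/2$.) This proves (i).

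\emph{Part (ii).} Since a smaller rectangle is contained in a larger one (in both the time and the space coordinate), it suffices to treat small $\ep$, so I may assume $[z_0-\ep,z_0+\ep]\subseteq(-r,0)$, where $z_0$ is the given point. Argue by contradiction: suppose the open rectangle $R=(1-\delta,1)\times(z_0-\ep,z_0+\ep)$ lies entirely in $\mathcal{C}$ for some $\delta\le\ep$. In the two-point case $h(t,z)=r(2\pi(t,z)-1)$, and since $\pi(t,z)\le\pi(t,z_0+\ep)\to0$ as $t\nearrow1$, we have $h(t,z)\to-r$ uniformly on the strip. As $z\ge z_0-\ep>-r$, this produces a uniform \emph{strong downward drift}: for $t$ close to $1$,
\begin{equation*}
\frac{h(t+s,Z_{t+s})-Z_{t+s}}{1-t-s}\le -\frac{C_0}{1-t}\quad\text{while in }R,\qquad C_0:=\tfrac12(r+z_0-\ep)>0 .
\end{equation*}

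Let $\sigma$ be the first exit time of $Z^{t,z}$ from $R$. Because $R\subseteq\mathcal{C}$ we have $\sigma\le\tau_{\mathcal D}$, so optional sampling of the martingale $v(t+\cdot\wedge\tau_{\mathcal D},Z)$ yields $v(t,z)=\E[v(t+\sigma,Z^{t,z}_{t+\sigma})]$; writing $w:=v-z\ge0$ and using \eqref{E:single} this becomes $w(t,z)=\E[w(t+\sigma,Z^{t,z}_{t+\sigma})]-(z-\E[Z^{t,z}_{t+\sigma}])$, where the subtracted term is the nonnegative expected drop. The crucial observation is that the bridge pins at $X\in\{-r,r\}$ and both values lie outside $[z_0-\ep,z_0+\ep]$; hence $Z$ must leave the strip \emph{spatially} before time $1$, so $Z^{t,z}_{t+\sigma}\in\{z_0-\ep,z_0+\ep\}$ almost surely. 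Dominating $Z^{t,z}$ (while in $R$) by a Brownian motion of drift $-C_0/(1-t)$ shows the upward-exit probability is at most $e^{-2C_0(z_0+\ep-z)/(1-t)}\to0$; starting from $z=z_0+\ep/2$, the exit is therefore at the lower level with probability tending to $1$, whence $z-\E[Z^{t,z}_{t+\sigma}]\ge\tfrac32\ep-o(1)\ge\ep$ for $t$ close to $1$.

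It remains to control $\E[w(t+\sigma,Z^{t,z}_{t+\sigma})]$. By the full-information upper bound $v\le\pi v_r+(1-\pi)v_{-r}$ and the fact that $v_{-r}(t',z')=z'$ whenever $z'>b_{-r}(t')=-r+\beta\sqrt{1-t'}$ (true near the terminal time for $z'\in(-r,0)$), one gets $w(t',z')\le\pi(t',z')(v_r(t',z')-z')\le Ce^{2rz'/(1-t')}$ for a constant $C$. At the exit levels $z'=z_0\pm\ep<0$ this is exponentially small and, over the relevant times, is largest at $t'=t$, so $\E[w(t+\sigma,Z^{t,z}_{t+\sigma})]\le Ce^{2r(z_0+\ep)/(1-t)}<\ep/2$ for $t$ close to $1$. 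Combining the two estimates gives $0\le w(t,z_0+\ep/2)<\ep/2-\ep<0$, a contradiction. Hence $R$ contains a stopping point, and since this point lies in $[1-\ep,1)\times(z_0-\ep,z_0+\ep)$, (ii) follows.

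\emph{Main obstacle.} The delicate step is the probabilistic estimate: one must show that the strong but $t$-dependent downward drift forces the process out of the strip through its \emph{lower} edge, incurring a genuine loss of order $\ep$, while the excess $v-z$ at the exit is merely exponentially small. The clean way past the terminal behaviour is the remark that the bridge pins strictly outside the strip, so the exit is necessarily spatial; the residual work is the comparison with a drifted Brownian motion needed to make the upward-exit probability vanish uniformly as $t\nearrow1$.
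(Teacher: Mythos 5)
Your proof is correct and follows essentially the same route as the paper: part (i) is the same application of the lower bound from Proposition~\ref{T:VEC} together with $v_r\ge r$ and $\pi(t,z)\to 1$, and part (ii) is the same contradiction argument based on the rectangle's exit time, a comparison with a negatively drifted Brownian motion showing the upward-exit probability is exponentially small, and the vanishing of $\pi(t,z)$ for $z<0$. The only (cosmetic) difference is that in (ii) you run the drift comparison on the filtered dynamics $h(t,z)=r(2\pi(t,z)-1)\to -r$ and use the martingale identity for $v-z$ up to the exit time, whereas the paper conditions on $X=\pm r$ and works with the conditional bridges, but the estimates are the same.
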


\begin{remark}
Since $\mathcal D_r\subseteq \mathcal D$, a consequence of Proposition~\ref{T:RKP} is that the stopping region $\mathcal D$ is 
disconnected. In particular, at any time close enough to the pinning date, there exist multiple too-good-to-persist points as well as a stop-loss point.
\end{remark}

\begin{proof}
\begin{enumerate}
\item  [(i)]
Let $0<z_0<z_1<r$ be given. Since $\pi(t,z)$ is continuous and increasing in both variables, we can choose $t_0\in (0,1)$ 
so that $\pi(t,z)\geq \pi(t_0,z_0)> (r+z_1)/(2r)$ for all $(t,z)\in (t_0,1)\times(z_0,z_1)$. Then, for such $(t,z)$,  
Proposition~\ref{T:VEC} implies that
\begin{IEEEeqnarray*}{rCl}
v(t,z) \geq   \pi(t,z)  v_r(t,z) - (1-\pi(t,z))r\geq r(2\pi(t,z)-1)>z_1\geq z.
\end{IEEEeqnarray*}
Consequently, $(t_0,1)\times(z_0,z_1)\subseteq\mathcal C$, which finishes the claim.
\item [(ii)]
Clearly, an equivalent claim is that for all $z_0,z_1\in\R$ with $-r<z_0<z_1<0$ and $t_0\in (0,1)$, the intersection $[t_0, 1) \times (z_0, z_1) \cap \mathcal{D} \neq \emptyset$. Thus assume for a contradiction that there exists $z_0$, $z_1$ and $t_0$ as above such that $[t_0, 1) \times (z_0, z_1) \subseteq \mathcal{C}$, and let $z:=(z_0+z_1)/2$ and $t\in[t_0,1)$.
Let $\hat{Z}$ be a Brownian bridge with $\hat{Z}_1=r$ and $\check{Z}$ a Brownian bridge with $\check{Z}_1=-r$. Then 
$\check Z^{t,z}$ will leave the rectangle $[t_0, 1) \times (z_0, z_1)$ before the pinning time a.s. Define 
\[\tau_i:=\inf\{s\geq 0:\check Z^{t,z}_{t+s}=z_i\},\]
$i=0,1$, and let 
$\eta:=\eta(t,z) := \P (\tau_1<\tau_0)$. Then, introducing a Brownian motion 
$U_s:=z+as+W_s$ with constant drift $a:=-(z_0+r)/(1-t)$,
it is straightforward to check that the pathwise comparison $\check Z_s\leq U_s$ holds on the random time interval
$[0,\tau_0\wedge \tau^U_1]$, where $\tau^U_i=\inf\{s\geq 0:U_s=z_i\}$, $i=0,1$.
Consequently, 
\begin{IEEEeqnarray*}{rCl}
\eta(t,z) &\leq& \P(\tau^U_1<\tau^U_0)\\
&\leq& \P (\sup_{0\leq s < \infty} \frac{-(z_0+r)}{1-t}s +W_s  \geq z_1-z) = \exp( \frac{-2(z_1-z)(z_0+r)}{1-t})
\end{IEEEeqnarray*}
(see p.~251 of \cite{BS02}), so $\eta(t,z)\to 0$ as $t\nearrow 1$.
By comparing with a problem in which the true pinning point is revealed at time
$\tau_0\wedge\tau_1$,
\begin{IEEEeqnarray*}{rCl}
v(t,z) &\leq&  \pi(t,z) v_r(t,z_1) + (1-\pi(t,z))(z_0 + \eta(t,z)(z_1-z_0))  \\
&\leq& \pi(t,z) v_r(t,z_1) + z_0 + \eta(t,z)(z_1-z_0).
\end{IEEEeqnarray*}
Moreover, since $\pi(t,z)\to 0$ as $t\to 1$ by \eqref{E:pi} and $v_r(\cdot,z_1)$ is bounded on $[0,1]$ by \eqref{E:known},
this implies $\limsup_{t\nearrow 1}v(t,z)\leq z_0<z$, which is a contradiction.
Consequently, $[t_0, 1) \times (z_0, z_1)$ cannot have empty intersection with the stopping region.
\end{enumerate}
\end{proof}

\begin{figure}[h]
\centering
\scalebox{0.9}{
\begin{overpic}[scale=0.6, tics=5, ]{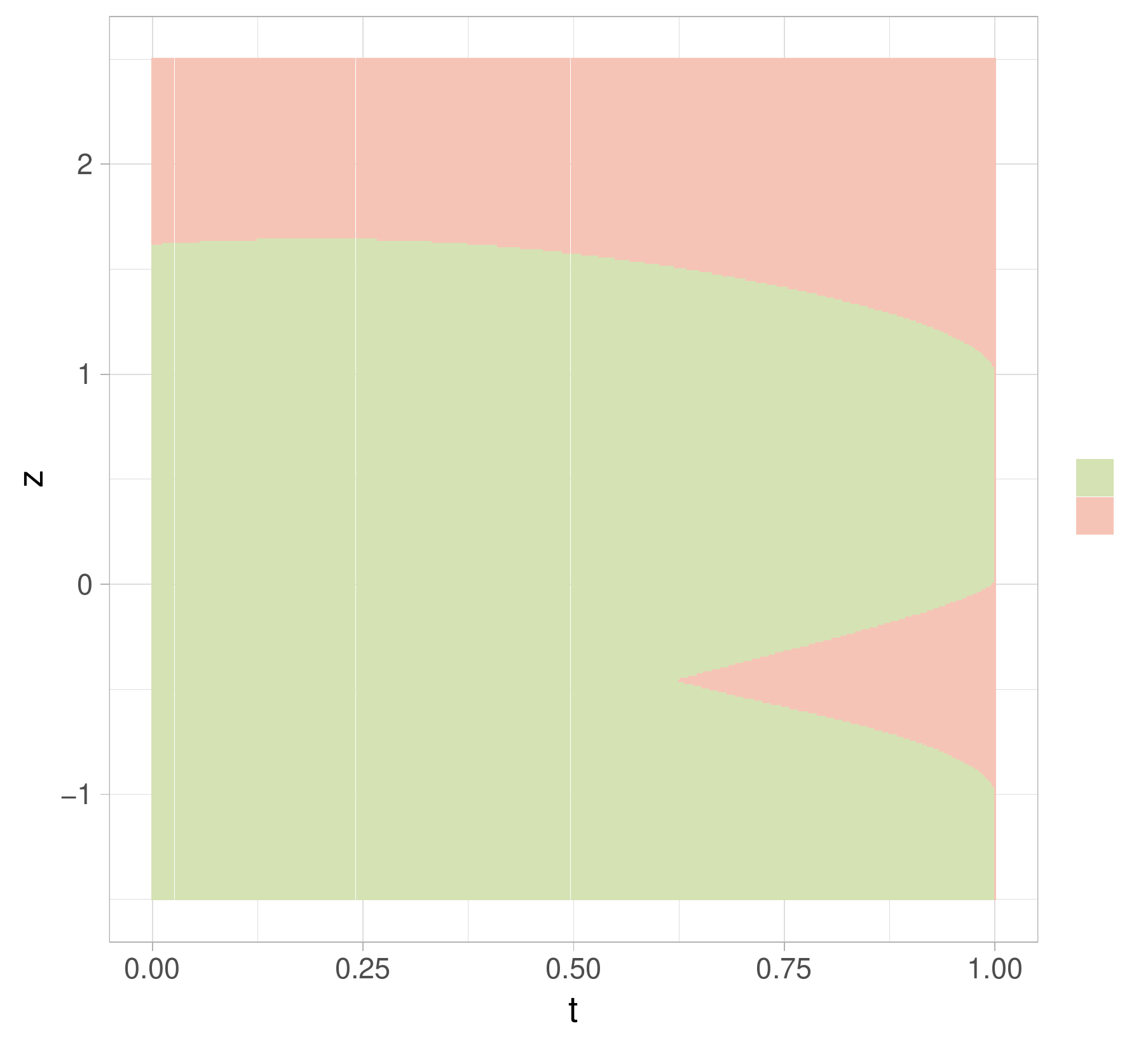}
\put(98,47.8){$\mathcal{C}$}
\put(98,44.3){$\mathcal{D}$}
\end{overpic}}
\caption{Numerically computed $\mathcal{C}$ and $\mathcal{D}$ in the two-point prior case with $p=1/2$, $r=1$.}
\end{figure}

%

\section{Identifying cases with no stop-loss points: the normal and the Gaussian mixture priors}
\label{S:normal}

In this last section, we investigate the cases of the normal and, more generally, the 
Gaussian mixture priors.

\subsection{Normal prior}
In the case $\mu=\mathcal{N}(m, \gamma^2)$, where, $m \in \R$, $\gamma\in (0,1)$, the expression
\begin{IEEEeqnarray*}{rCl}
f(t,y)-\frac{y}{1+t} = \frac{\frac{m}{\gamma^2}+y}{\frac{1}{\gamma^2}+t}- \frac{y}{1+t}
\end{IEEEeqnarray*}
(with $f$ defined in \eqref{E:deff}) is decreasing in $y$. Hence Theorem \ref{T:DY} (\ref{Tp:DDec}) yields existence of $b:[0,1) \to \R$ such that 
$\mathcal{C} = \{ (t,z) \in [0,1) \times [- \infty, \infty] \,:\, z < b(t) \}$. Moreover, in this case $b$ 
is continuous and  decreasing as the next proposition shows.

\begin{proposition}
Let $\mu=\mathcal{N}(m, \gamma^2)$, where, $m \in \R$ and $\gamma\in (0,1)$. Then there exists a decreasing continuous 
$b:[0,1]\to\R$ with 
$b(1)=\frac{m}{1-\gamma^2}$ 
such that $\mathcal{C}=\{(t,z) \in [0,1)\times\R\,:\, z<b(t)\}$. 
Moreover, $b$ is the unique continuous solution of the integral equation
\begin{IEEEeqnarray}{rCl}\label{E:boundary}
b(t)=h(t,b(t))-\int_0^{1-t}\E\left[\frac{h(t+u, Z^{t,b(t)}_{t+u})-Z^{t,b(t)}_{t+u}}{1-t-u}\Ind_{\{Z^{t,b(t)}_{t+u}>b(t+u)\}} \right]\ud u
\end{IEEEeqnarray}
satisfying 
$b(t)\geq \frac{m}{1-\gamma^2}$ for all $t\in[0,1]$.
\end{proposition}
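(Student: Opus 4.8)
The plan is to combine the one-sided structure already provided by Theorem~\ref{T:DY} with the explicit Gaussian (conjugate) structure of the normal prior. A direct computation of the posterior mean gives the affine drift
\[
h(t,z)-z=\frac{(1-t)(1-\gamma^2)\,(z^\ast-z)}{1-t(1-\gamma^2)},\qquad z^\ast:=\frac{m}{1-\gamma^2},
\]
so $(h(t,z)-z)/(1-t)$ is strictly positive for $z<z^\ast$ and strictly negative for $z>z^\ast$. Since $f(t,y)-y/(1+t)$ is decreasing, Theorem~\ref{T:DY}(\ref{Tp:DDec}) already yields $\mathcal C=\{z<b(t)\}$, so only the location and regularity of $b$ remain. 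First I would show $b(t)\ge z^\ast$: for $z<z^\ast$ the suboptimal strategy ``continue until $Z^{t,z}$ first reaches $z^\ast$'' keeps the process in the positive-drift region, so by \eqref{E:single} the expected payoff strictly exceeds $z$, forcing $(t,z)\in\mathcal C$; as the diffusion immediately enters $\{z<z^\ast\}$ even when started exactly at $z^\ast$, one in fact gets $b(t)>z^\ast$ for every $t<1$.

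The core difficulty is finiteness, monotonicity and continuity of $b$. Here I would exploit that, the drift being affine in $z$, the process $Z$ (equivalently $Y$ in the reformulation \eqref{E:OSY}) is Gauss--Markov with an explicitly solvable law, so the stopping problem is of the classical Shepp type; the established theory for such problems then delivers a finite, \emph{decreasing}, continuous boundary together with the smooth-fit condition $\pt_2 v=1$ across $b$. It is precisely here that the elementary tools fail: since $\mathcal N(m,\gamma^2)$ has unbounded support, Proposition~\ref{T:VEC0} does not apply, and crude coupling/comparison estimates only give $v(t,z)-z\to0$ as $z\to\infty$ rather than the exact identity $v=z$ needed to classify points as belonging to $\mathcal D$. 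A more self-contained route to finiteness is to argue on the equation $\pt_1 v+\mathcal L v=0$: writing $w:=v-z\ge0$, the homogeneous solutions grow like $\exp(\mathrm{const}\cdot z^2)$ as $z\to+\infty$, which is incompatible with $w\ge0$ and $w\to0$, so $\mathcal C$ must be bounded above at each time. The terminal value $b(1)=z^\ast$ then follows by matching the lower bound $b(t)>z^\ast$ with an upper bound: for fixed $z_0>z^\ast$ the $1/(1-t)$ amplification keeps the drift near $(1,z_0)$ bounded away from $0$ and negative, so stopping is optimal there for $t$ close to $1$, whence $\limsup_{t\nearrow1}b(t)\le z_0$; letting $z_0\downarrow z^\ast$ gives the claim. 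This step also yields $v(t,z)\to z$ as $t\nearrow1$ for every fixed $z$.

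With the boundary finite, monotone, continuous and obeying smooth fit, the integral equation \eqref{E:boundary} follows from Peskir's change-of-variable method. Applying the It\^o/Dynkin formula to $v(t+u,Z^{t,b(t)}_{t+u})$ over $[0,1-t)$ and inserting the free-boundary characterisation from Theorem~\ref{T:OS} (so that $\pt_1 v+\mathcal L v=0$ on $\mathcal C$ and equals $(h-z)/(1-(t+u))$ on $\mathcal D$, where $v=z$), the smooth-fit condition (which removes the local-time term on $b$), the terminal limit $v(1^-,\cdot)=\mathrm{id}$, and the filtering identity $\E[Z_1\mid Z_t=b(t)]=h(t,b(t))$, one obtains \eqref{E:boundary} after evaluating at the boundary point where $v(t,b(t))=b(t)$.

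For uniqueness I would run the standard four-step Peskir argument restricted to continuous solutions $c\ge z^\ast$: define the candidate $U^c$ by the right-hand side of \eqref{E:boundary} with $c$ in place of $b$; verify $U^c=z$ on $\{z\ge c(t)\}$; show by a change-of-variable computation that $U^c$ solves the free-boundary problem and satisfies $U^c\le v$; and finally conclude that the stopping region generated by $c$ must coincide with $\mathcal D$, forcing $c=b$. The \textbf{main obstacle} is the finiteness/regularity of the free boundary, i.e.\ establishing $v=z$ \emph{exactly} on a region rather than merely asymptotically: the unbounded support of the normal prior removes the comparison tools of Proposition~\ref{T:VEC0}, so one must rely on the Gauss--Markov/Shepp structure or the PDE growth argument. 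A secondary, technical obstacle is the degeneracy of the diffusion coefficient as $t\nearrow1$, which must be handled carefully both in pinning down $b(1)=z^\ast$ and in justifying the change-of-variable and uniqueness steps against the non-standard terminal condition.
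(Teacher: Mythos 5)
Your overall strategy coincides with the paper's: invoke Theorem~\ref{T:DY}(\ref{Tp:DDec}) for the one-sided structure, exploit the affine Gaussian drift to reduce to a classical explicitly solvable problem, and then obtain \eqref{E:boundary} and its uniqueness by the Peskir change-of-variable/four-step machinery (the paper likewise delegates these last two steps to \cite[Chapter 25]{PS06}). However, at precisely the point you flag as the ``main obstacle'' --- finiteness, monotonicity and continuity of $b$ and the value $b(1)=m/(1-\gamma^2)$ --- the paper has a concrete mechanism that your proposal only gestures at. The key observation is that with $\mu=\mathcal N(m,\gamma^2)$ the drift is $\frac{h(t,z)-z}{1-t}=\frac{m-z(1-\gamma^2)}{1-t(1-\gamma^2)}$, so $Z$ is \emph{exactly} the restriction to $[0,1]$ of a Brownian bridge with \emph{known} pinning point $z^\ast=m/(1-\gamma^2)$ at the later time $T=\frac{1}{1-\gamma^2}>1$. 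The transformation $U_s=e^s(Z_{T(1-e^{-2s})}-z^\ast)$ of \cite{EW} then turns the problem into stopping a time-homogeneous Ornstein--Uhlenbeck process with discounted payoff $e^{-\tau}U_\tau$ on the horizon $T'=\tfrac12\log\frac{T}{T-1}$. Time-homogeneity makes the value decreasing in time, hence the boundary $g$ is decreasing \emph{for free}; finiteness of $g$ follows by comparison with the infinite-horizon OU problem solved in \cite{EW}; and $g(T')=0$ (equivalently $b(1)=z^\ast$) is read off from the sign change of the generator applied to the payoff. None of this is recovered by your substitutes.

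Concretely, two of your fallback arguments do not close. First, the PDE growth argument for finiteness assumes $w=v-z\to0$ as $z\to+\infty$, which is itself the unproved assertion (Proposition~\ref{T:VEC0} is unavailable for unbounded support, as you note), and ruling out only the $e^{cz^2}$-growing homogeneous solution does not force $w$ to vanish identically above some level --- a decaying positive solution is not excluded. Second, your argument for $b(1)=z^\ast$ needs $(t,z_0)\in\mathcal D$ for $t$ near $1$ and fixed $z_0>z^\ast$, but a bounded negative drift over a shrinking horizon only gives $v(t,z_0)-z_0\to0$, not $v(t,z_0)=z_0$; the paper gets the terminal value from the OU picture instead. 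Your assertion that ``established theory for Gauss--Markov/Shepp-type problems delivers a finite, decreasing, continuous boundary'' is the right instinct, but without exhibiting the reduction to the known-pinning-point bridge on $[0,T]$ (restricted to $[0,1]$) it remains an appeal to a theorem you have not identified. The remaining ingredients (the lower bound $b\ge z^\ast$ via the positive-drift suboptimal strategy, and the derivation and uniqueness of \eqref{E:boundary}) are fine and match the paper's level of detail.
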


\begin{proof}
The existence of $b:[0,1)\to[-\infty,\infty]$ follows from Theorem~\ref{T:DY} above. 

If $\mu=\mathcal{N}(m, \gamma^2)$ then the drift term in \eqref{E:ZMark} is given by 
$\frac{h(t,z)-z}{1-t}= \frac{m-z(1-\gamma^2)}{1-t(1-\gamma^2)}$, so
\begin{IEEEeqnarray}{rCl} \label{E:ZPL}
\vd Z_t = \frac{\frac{m}{1-\gamma^2}- Z_t}{\frac{1}{1-\gamma^2}-t} \ud t + \ud \hat W_t
\end{IEEEeqnarray}
for $0\leq t<1$. Note that $Z$ in \eqref{E:ZPL} can be viewed as the restriction to the time interval $[0,1]$ of a Brownian bridge 
pinning at $m/(1-\gamma^2)$ at time $T:=\frac{1}{1-\gamma^2}$. 
Thus the problem of stopping a Brownian bridge with a normally distributed pinning point reduces to the problem
of stopping a Brownian bridge with a known pinning point $(T,mT)$, but where stopping is restricted to the time interval $[0,1]$.

Let us define $U_s:= e^s (Z_{T(1-e^{-2s})}-m/(1-\gamma^2))$, $s\geq0$ (this transformation was also used in \cite{EW}). Then
\begin{IEEEeqnarray*}{rCl}
\vd U_s = -U_s \ud s + \sqrt{2T} \ud W_s,
\end{IEEEeqnarray*}
where $W$ is a Brownian motion, i.e.~$U$ is an Ornstein-Uhlenbeck process. Moreover,
\begin{IEEEeqnarray*}{rCl}
V=\sup_{\tau \in\mathcal{T}^Z_{1}}\E[Z_\tau] = \sup_{\tau \in\mathcal{T}^U_{T'}}\E[e^{-s} U_\tau]+m/(1-\gamma^2),
\end{IEEEeqnarray*}
where $T'= \frac{1}{2}\log(\frac{T}{T-1})$.  A Markovian value function $w$ in terms of the process $U$ is defined as
\begin{IEEEeqnarray*}{rCl} \label{E:OSOU}
w(s,u) = \sup_{\tau \in\mathcal{T}^U_{T'-s}}\E[e^{-\tau} U^{s,u}_{s+\tau}], \quad (s,u)\in [0, T']\times\R,
\end{IEEEeqnarray*}
and due to the time-homogeneity of the process $U$, 
the function $w$ is decreasing in the time variable $s$. Consequently, the boundary $g:[0,T'] \to \R$,
the first passage time of $U$ above which is optimal, has to be decreasing. Moreover, all points $(s,u)$ with $u<0$ are automatically
in the continuation region since the operator on the pay-off function is positive at such points, compare \cite[Chapter 10]{bO}. Thus
$g(t)\geq 0>-\infty$. Furthermore, by comparison with the corresponding infinite horizon problem (see \cite{EW}), 
we also have $g(t)<\infty$, so the boundary is finite at all times.

As the problem \eqref{E:OSOU} is in the class of well-understood optimal stopping problems with monotone
boundaries, the continuity of the boundary $g$ and its limiting value $g(T^\prime)$ can be determined using standard arguments (see, e.g.,~\cite[Section 25.2]{PS06}). We omit the details but point out that the limiting value $g(T^\prime)=0$ is the unique zero of the 
corresponding differential operator $\frac{1}{2}\partial_u^2-u\partial_u-1$ acting on the payoff $u$.
Hence, since 
\begin{IEEEeqnarray*}{rCl}
w(s,u)=v(T(1-e^{-2s}), e^{-s}u)-m/(1-\gamma^2), 
\end{IEEEeqnarray*}
there exists a decreasing continuous boundary $b:[0,1] \to \R$ satisfying
\begin{IEEEeqnarray*}{rCl}
b(t)=\sqrt{1-t/T}g(\frac{1}{2}\log \frac{T}{T-t}) + m/(1-\gamma^2)
\end{IEEEeqnarray*}
such that $\mathcal{D}= \{(t,z)\,:\, z\geq b(t) \}$. Since $g(T')=0$, we find that $b(1)=m/(1-\gamma^2)$.

Finally, the fact that $b$ is the unique solution of \eqref{E:boundary} within the class
of continuous functions also follows standard arguments (compare \cite[Chapter 25]{PS06}) and is therefore not included.
\end{proof}

\begin{remark}
In the case $\gamma=1$, the process $Z$ becomes Brownian motion on $[0,1]$, so the problem becomes trivial (by optional sampling, 
any stopping time $\tau$ is optimal). For  $\gamma>1$, the condition of Theorem \ref{T:DY} (\ref{Tp:DInc}) holds and grants existence of a single lower boundary the first passage time below which is optimal. In this case, at the pinning time, the process $Z$ is even more dispersed than a Brownian motion, possibly making the case unnatural to model additional information available about the terminal value. We do not study this case further. 
\end{remark}

\subsection{Gaussian mixtures}
Another natural class of prior distributions to consider is Gaussian mixtures, i.e. distributions of the form
\begin{IEEEeqnarray*}{rCl}
\mu = \sum_{i=1}^n p_i N(m_i, \gamma_i^2), \text{where } n\in \N, \text{ each } m_i \in \R,\, \gamma_i >0,\, p_i \in (0,1), \text{ and } \sum_{i=1}^n p_i = 1.
\end{IEEEeqnarray*}
A useful property of Gaussian mixtures is that they are conjugate priors, i.e.~every possible posterior $\mu_{t,z}$ ($(t,z)\in[0,1]\times \R$) is also a Gaussian mixture parametrised by the same number of parameters as the prior. In particular, $f(t,y)$ in \eqref{E:FF} can be calculated explicitly, making the condition of Theorem \ref{T:DY} checkable analytically. 

\begin{remark} 
One can check that if the components of a Gaussian mixture have the same standard deviations, then a Brownian bridge with such a prior can be viewed as a Brownian bridge pinning at a later time according to a discrete prior. 
\end{remark}

\begin{figure}[!h]
\begin{subfigure}[b]{0.45\textwidth}
\scalebox{0.8}{
\begin{overpic}[scale=0.5, tics=5,trim=0 0 0 0, clip=true]{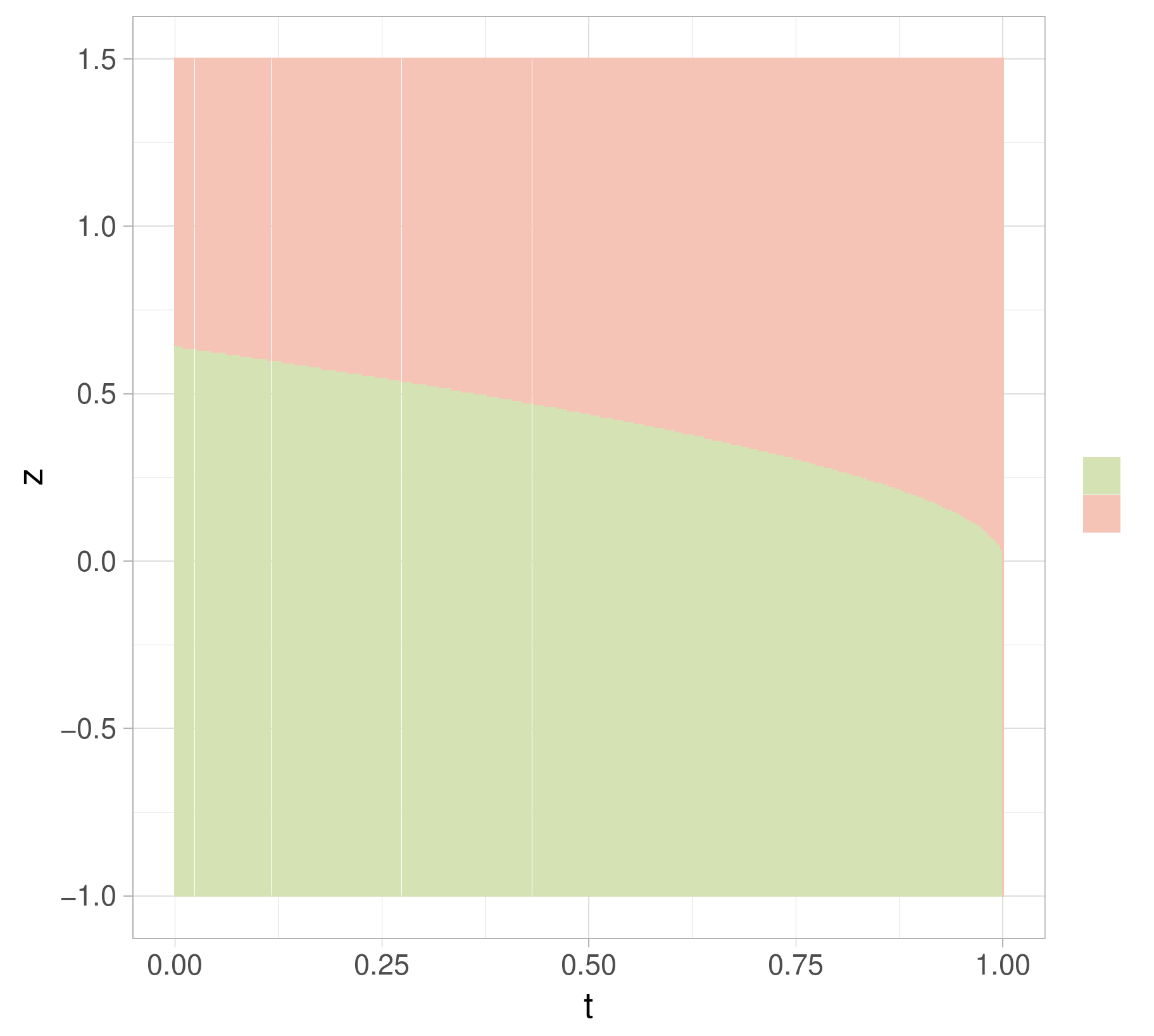}
\put(98, 47.4){\scriptsize $\mathcal{C}$}
\put(98, 44){\scriptsize $\mathcal{D}$}
\end{overpic}}
\scalebox{0.52}{
\begin{overpic}[scale=1, tics=5,trim=0 0 0 0, clip=true]{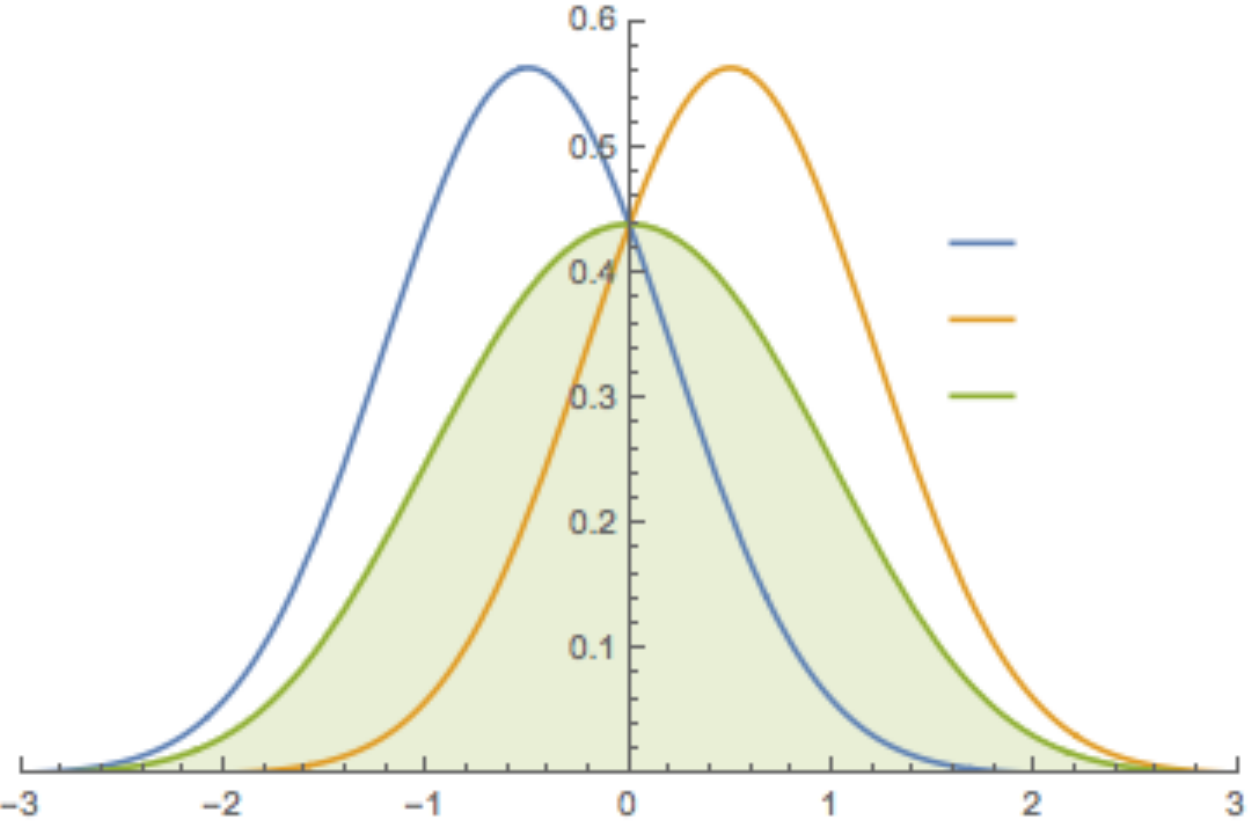}
\put(83, 46.4){\footnotesize $N(-\frac{1}{2},\frac{1}{2})$}
\put(83, 40.4){\footnotesize $N(\frac{1}{2},\frac{1}{2})$}
\put(83, 34.3){\footnotesize $\frac{1}{2} N(-\frac{1}{2},\frac{1}{2})+\frac{1}{2}N(\frac{1}{2},\frac{1}{2})$}
\end{overpic}}
\caption{$r=-l=1/2$, $\gamma^2=1/2$, $p=1/2$}
\label{F:SymGM}
\end{subfigure} \qquad
\begin{subfigure}[b]{0.45\textwidth}
\scalebox{0.8}{
\begin{overpic}[scale=0.5, tics=5]{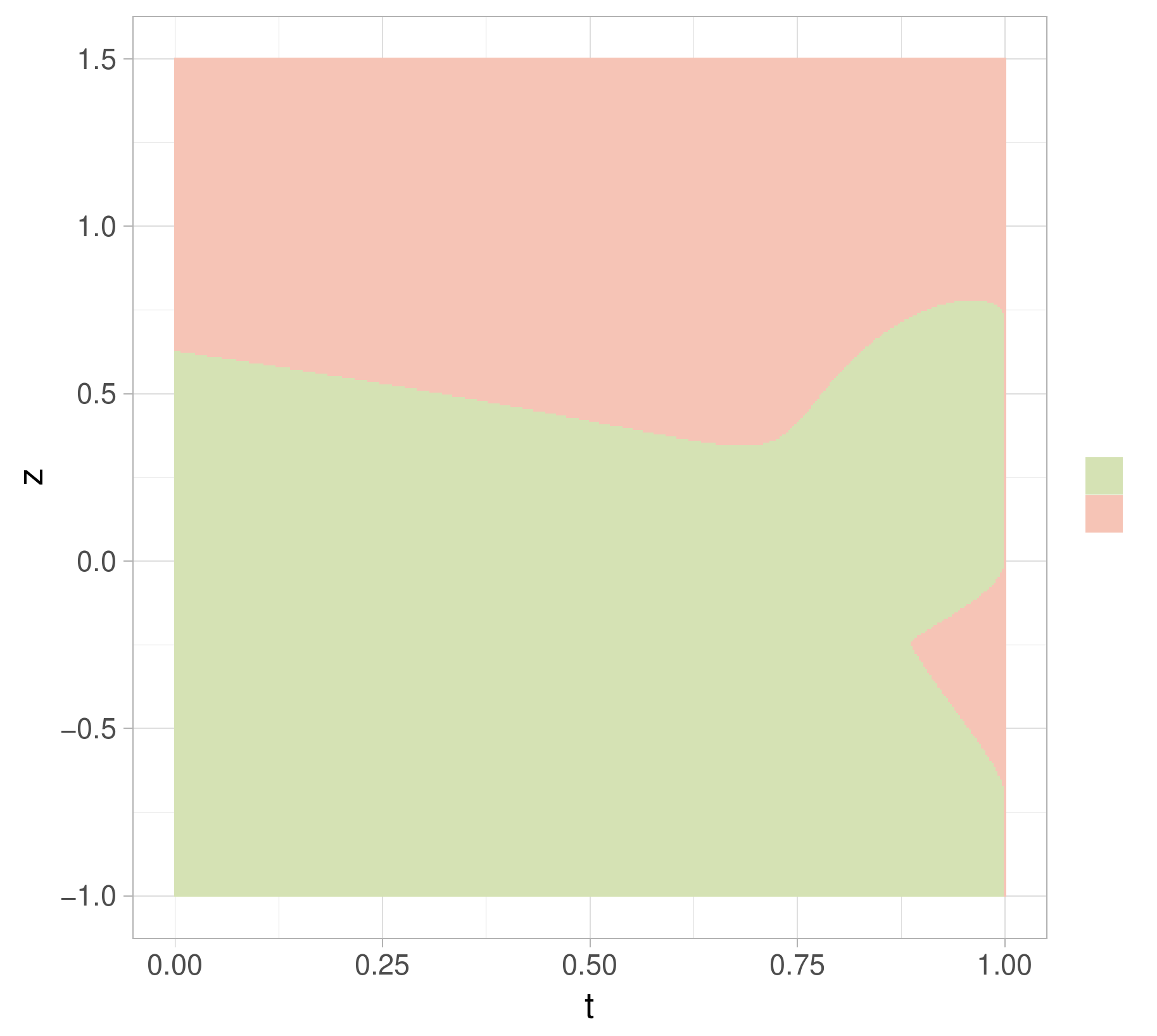}
\put(98, 47.4){\scriptsize $\mathcal{C}$}
\put(98, 44){\scriptsize $\mathcal{D}$}
\end{overpic}}
\scalebox{0.52}{
\begin{overpic}[scale=1, tics=5]{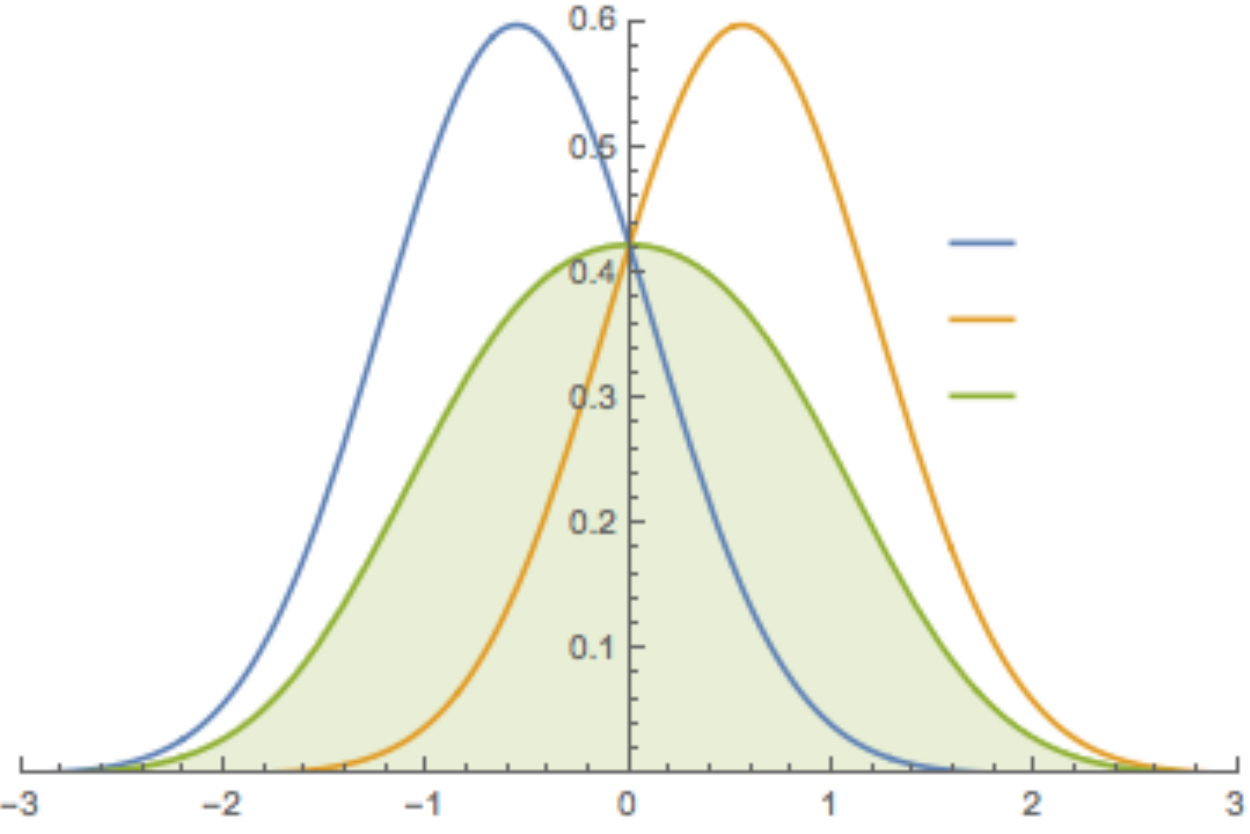}
\put(83, 46.4){\footnotesize $N(-\frac{5}{9},\frac{4}{9})$}
\put(83, 40.4){\footnotesize $N(\frac{5}{9},\frac{4}{9})$}
\put(83, 34.3){\footnotesize $\frac{1}{2} N(-\frac{5}{9},\frac{4}{9})+\frac{1}{2}N(\frac{5}{9},\frac{4}{9})$}
\end{overpic}}
\caption{$r=-l=5/9$, $\gamma^2=4/9$, $p=1/2$} \label{F:GMSymW}
%
%
%
\end{subfigure}
\caption{Numerically computed $\mathcal{C}$ and $\mathcal{D}$ for two similar symmetric Gaussian mixtures whose densities together with the constituent parts are plotted under their respective region plots. In the left plot, the single upper boundary strategy is guaranteed by Theorem~\ref{T:DY}.}
\label{F:GM}
\end{figure}
\begin{example}[Two-component Gaussian mixture] Let $\mu = p N(r, \gamma^2) + (1-p) N(l, \eta^2)$, where $r, l \in \R$, $\gamma, \eta>0$, and $p \in (0,1)$.
Calculating the function $f$ explicitly, we obtain
{ 
\begin{IEEEeqnarray*}{rCl} 
\IEEEeqnarraymulticol{3}{l}{
f(t,y)}\\
&=& 
\frac{\frac{1-p}{\gamma}\left(\gamma^2 t+1\right)^{\frac{3}{2}} \left(\eta^2 y+l\right) e^{\frac{1}{2} \left(\frac{r^2}{\gamma^2}+\frac{\left(\eta^2 y+l\right)^2}{\eta^4 t+\eta^2}\right)}
+\frac{p}{\eta}\left(\eta^2 t+1\right)^{\frac{3}{2}} \left(\gamma^2 y+r\right) e^{\frac{1}{2} \left(\frac{\left(\gamma^2 y+r\right)^2}{\gamma^4 t+\gamma^2}+\frac{l^2}{\eta^2}\right)}}{\frac{1-p}{\gamma}\left(\gamma^2 t+1\right)^{\frac{3}{2}} \left(\eta^2 t+1\right) e^{\frac{1}{2} \left(\frac{r^2}{\gamma^2}+\frac{\left(\eta^2 y+l\right)^2}{\eta^4 t+\eta^2}\right)}+ \frac{p}{\eta} \left(\gamma^2 t+1\right) \left(\eta^2 t+1\right)^{\frac{3}{2}} e^{ \frac{1}{2} \left(\frac{\left(\gamma^2 y+r\right)^2}{\gamma^4 t+\gamma^2}+\frac{l^2}{\eta^2}\right)}}.
\end{IEEEeqnarray*}}
As a result, $\frac{\pt}{\pt y} \lt(f(t,y)- y/(1+t) \rt)$ is also an explicit expression, so checking the condition of Theorem \ref{T:DY}  is a matter of examining analytically whether
\begin{IEEEeqnarray}{rCl} \label{E:DerCond}
\frac{\pt}{\pt y} \lt(f(t,y)- y/(1+t) \rt) \leq 0, \quad \text{for all } (t,y).
\end{IEEEeqnarray}
In some interesting special cases, the condition \eqref{E:DerCond} reduces to an easily checkable condition. 
For example, for Gaussian mixtures symmetric around zero, i.e.~$l=-r$, $p=1/2$, $\gamma=\eta$, the condition \eqref{E:DerCond} is satisfied if and only if $0 \leq r \leq \gamma \sqrt{1-\gamma^2}$. Hence the single-upper stopping boundary strategy in
Figure~\ref{F:SymGM} is guaranteed by Theorem~\ref{T:DY}. 
In addition, Figure~\ref{F:GM} illustrates the subtle and sensitive dependence of the stopping strategy on the prior distribution; two seemingly similar symmetric unimodal Gaussian mixture priors yield very different optimal stopping strategies. This illustrates
the high complexity of deriving structural properties of the problem under a general prior.

\end{example}

%
\bibliographystyle{plain}

\end{document}